\theoremstyle{thmstyleone}%
\newtheorem{theorem}{Theorem}%  meant for continuous numbers
\newtheorem{proposition}[theorem]{Proposition}% 
\theoremstyle{thmstyletwo}%
\newtheorem{example}{Example}%
\newtheorem{remark}{Remark}%
\theoremstyle{thmstylethree}%
\begin{document}

\title[Topology and regularity for generalized ultradistribution algebras]{Topology and regularity for generalized ultradistribution algebras}

%%=============================================================%%
%% Prefix	-> \pfx{Dr}
%% GivenName	-> \fnm{Joergen W.}
%% Particle	-> \spfx{van der} -> surname prefix
%% FamilyName	-> \sur{Ploeg}
%% Suffix	-> \sfx{IV}
%% NatureName	-> \tanm{Poet Laureate} -> Title after name
%% Degrees	-> \dgr{MSc, PhD}
%% \author*[1,2]{\pfx{Dr} \fnm{Joergen W.} \spfx{van der} \sur{Ploeg} \sfx{IV} \tanm{Poet Laureate} 
%%                 \dgr{MSc, PhD}}\email{iauthor@gmail.com}
%%=============================================================%%

\author[1]{\fnm{Stevan} \sur{Pilipovi\'{c}}}\email{pilipovic@dmi.uns.ac.rs}
\equalcont{These authors contributed equally to this work.}

\author[3]{\fnm{Dragana} \sur{Risteski}}\email{dragana.v.risteski@gmail.com}
\equalcont{These authors contributed equally to this work.}

\author[2]{\fnm{Dimitris} \sur{Scarpal\'ezos}}\email{dim.scarpa@gmail.com}
\equalcont{These authors contributed equally to this work.}

\author*[1]{\fnm{Milica} \sur{\v Zigi\'c}}\email{milica.zigic@dmi.uns.ac.rs}

\affil*[1]{\orgdiv{Faculty of Sciences}, \orgname{University of Novi Sad}, \orgaddress{\street{Trg D. Obradovica 4}, \city{21000 Novi Sad}, %\postcode{21000}, %\state{State}, 
\country{Serbia}}}

\affil[2]{\orgdiv{Centre de Math\'ematiques de Jussieu}, \orgname{Universit\'e Paris 7 Denis Diderot}, \orgaddress{\street{Case Postale 7012, 2, place Jussieu}, \city{F-75251 Paris Cedex 05}, 
%\postcode{75251}, %\state{State}, 
\country{France}}}

\affil[3]{%\orgdiv{Department}, \orgname{Organization}, 
\orgaddress{%\street{Street}, 
\city{21000 Novi Sad}, %\postcode{610101}, \state{State}, 
\country{Serbia}}}

%%==================================%%
%% sample for unstructured abstract %%
%%==================================%%

\abstract{Compiling essential results for non-quasianalytic ultradistribution spaces and Colombeau versions of generalized ultradistribution algebras, we analyze  strong $B$- and strong $R$-association of a generalized ultradistribution $[(f_\varepsilon)]$. The strong association of $[(f_\varepsilon)]$ to a Komatsu-type ultradistribution $T$, with additional assumption on regularity of $[(f_\varepsilon)]$  of Beurling, respectively, Roumieu type,  implies that $T$ is an ultradifferentiable function of Beurling, Roumieu type, respectively.  We demonstrate that, under suitable conditions on regularity, a weakly negligible net $(g_\varepsilon)_{\varepsilon\in(0,1)}$ (meaning that the net of complex numbers $(\int g_\varepsilon\phi dx)_{\varepsilon\in(0,1)}$ is Beurling, respectively, Roumieu negligible for every ultradifferentiable function $\phi$ in the corresponding test space), is a negligible net in the sense of generalized ultradistributions. Furthermore, we prove that a translation invariant generalized ultradistribution $g$ is equal to a generalized constant in both types of generalized ultradistribution algebras. }

\keywords{generalized ultradistributions, regularity, association, translation invariance}

\pacs[MSC Classification]{46F05, 46F30, 46S10}

\maketitle
 
%%%%%%%%%
\section{Introduction and basic notions}
%%%%%%%%%
 
Spaces of ultradistributions of non-quasianalytic type, embedded into corresponding algebras of generalized ultradistributions  $\mathcal{G}^{(M_p)} (\Omega)$ and $\mathcal{G}^{\{M_p\}} (\Omega),$  are introduced in \cite{D.V.1, ps, Del-Has-Pil-Val, DHPV}  in a manner that respects  products of ultradifferentiable functions and the action of ultradifferential operators. Furthermore, it was demonstrated in \cite{D.V.1}, \cite{ps} and \cite{Del-Has-Pil-Val} that a generalized ultradistribution  is regular if it is embedded into  a specific algebra of regular elements.  The term "algebra of regular elements" implies that this algebra corresponds to an appropriate algebra of smooth functions.

In this paper, inspired by the methodology employed in establishing Hausdorff topologies in Colombeau-type algebras \cite{Del-Sca-3,Sca1,Sca3} (also referenced in \cite{ober 001}),  we introduce topological sharp structures in the algebras $\mathcal{G}^{(M_p)} (\Omega)$ and $\mathcal{G}^{\{M_p\}} (\Omega),$ as well as in their vector-valued versions. 

In Colombeau theory (refer to \cite{col1, gkos, her-kunz, Sca1, Sca2, Sca3, Sca4, psv.1}), it has been established that a property known as strong association between a regular generalized function $[(f_\varepsilon)]$ (where $[(f_\varepsilon)] \in \mathcal{G}^{\infty}(\Omega)$) and a distribution $F \in \mathcal{D}'(\Omega)$ implies that $F$ is a smooth function.
%Later analogous theorems have been proved for other kinds of regularity under the hypothesis of "strong %associations" (\cite{PSV1}\cite{PS2} \cite{PSV2})and even in a first paper on ultradistributions \cite{PS3}
We  prove these regularity results within the framework of generalized ultradistributions.  Actually, distinctions arise between the Beurling and Roumieu cases, prompting the need for an additional assumption in $R$-strong association to establish the corresponding regularity result. Example \ref{e1} and Proposition \ref{RUM} in Section \ref{s3} are related to the $R$-strong association. 

In the final section, we analyse Beurling and Roumieu weakly negligible nets, introduced and explained in Section \ref{sec 1.4}. Generalized negligible nets of complex numbers $(c_\varepsilon)_{\varepsilon\in(0,1)}$ in the sense of ultradistributions are defined through the estimate $|c_\varepsilon|=O({e^{-M(k/\varepsilon)}})$ for every $k\in\mathbb R$ (Beurling case) and $|c_\varepsilon|=O({e^{-N_{(k_i)}(1/\varepsilon)}})$ for every $(k_i)_{i\in \mathbb{Z}+}\in\mathfrak R$ (Roumieu case). Here, $M$ and $N_{(k_i)}$ are associated functions for the sequence $(M_p)_{p\in\mathbb{N}}$ determining both types of ultradistribution spaces, and $(k_i)_{i\in\mathbb{Z}_+}$ is a sequence of positive numbers increasing to infinity.  Weakly negligible nets $(g_\varepsilon)_{\varepsilon\in (0,1)}$ are those for which the nets of complex numbers ($\langle g_\varepsilon,\phi\rangle)_{\varepsilon\in(0,1)}$ are Beurling or Roumieu negligible, for  every $\phi$ in the corresponding test space. The property that a net $(g_\varepsilon)_{\varepsilon\in(0,1)}$ is weakly negligible, with additional assumptions, implies that this net is a  negligible net of generalized ultradistribution, that is, $[(g_\varepsilon)]=0$. This is the focus of Theorems \ref{thm 4.1} and \ref{4-2}.  In the last theorem, we establish that the translation invariant generalized ultradistribution $g$ is equal to a generalized constant in both types of generalized ultradistribution algebras.  While our proofs draw inspiration from our previous paper \cite{PSV}, they are adapted to the complexities of the current setting.

In terms of notation, the big "$O$" symbol adheres to its standard meaning: we express  $a_n=O(n^s)$   if $|a_n/n^s|\leq C, $ $n\geq n_0,$  for some $C>0$  and $n_0\in\mathbb N$; similarly,  $a_\varepsilon=O(\varepsilon^s)$ if  $|a_\varepsilon/\varepsilon^s|\leq C, $ $\varepsilon\in (0,\varepsilon_0)$, for some $C>0$ and $\varepsilon_0\in(0,1).$ For brevity, sequences such as $(M_p)_{p\in\mathbb{N}}, $ $(r_i)_{i\in\mathbb{Z}_+},$ or nets $(f_\varepsilon)_{\varepsilon\in (0,1)}$ will be abbreviated as $(M_p), $ $(r_i),$ and $(f_\varepsilon),$ respectively.

In the paper, we employ the following version of the Baire theorem: Suppose $S$ is either a complete metric space or a locally compact $T_2$-space.  If $E_i,$ $i\in \mathbb{N}$ constitutes a countable collection of nowhere dense subsets of $S$ (i.e., $ \overline{E_i}^\circ=\emptyset$), then $S$ cannot be expressed as the union of the sets $E_i,$ $i\in \mathbb{N}.$

\subsection{Notation}\label{bas-def}

We employ the standard notations: $\mathbb N=\mathbb Z_+\cup\{0\}$ and $D^{\alpha_j}_{x_j}=i^{-\alpha_j}\partial^{\alpha_j}/\partial x_j^{\alpha_j}, $ $j=1,\ldots,d, $ $\alpha=(\alpha_1,\dots,\alpha_d) \in\mathbb N^d$.   We assume that $M_p, $ $p\in\mathbb N,$ is a sequence of positive numbers, $M_0=1$, %$M_p^{1/p}\rightarrow\infty$, as $p\rightarrow \infty$, 
so that it satisfies conditions (cf. \cite{Komatsu1}):
\begin{itemize}
\item[(M.1)] $M_{p}^{2} \leq M_{p-1} M_{p+1},$ $p \in\mathbb Z_+;$
\item[(M.2)] $M_{p} \leq c_0H^{p} \min_{0\leq q\leq p} \{M_{p-q} M_{q}\},$  $p,q\in \mathbb N,$ for some $c_0,H\geq 1;$
\item[(M.3)] $\sum_{j=p+1}^{\infty}M_{j-1}/M_j\leq c_0pM_p/M_{p+1},$ $p\in\mathbb N,$ for some $c_0\geq 1.$
\end{itemize}
%\noindent Sometimes we will impose the following additional condition on a weight %sequence $(M_p)_{p\in\mathbb N}$\\
%\indent $(M.3)'$ $\sum_{p=1}^{\infty}M_{p-1}/M_p<\infty$,\\
%\noindent or the stronger condition\\
%we will always emphasise when we impose either one of these conditions. 
The Gevrey sequence $p!^{\sigma},$ $p\in\mathbb N,$ satisfies (M.1) and (M.2) when $\sigma>0;$ if $\sigma>1,$ it also satisfies (M.3).  We  use the notation $M_{\alpha}$ for $M_{|\alpha|},$ $|\alpha|=\alpha_1+\dots+\alpha_d.$
% If $(A_p)_{p\in\mathbb N}$ is another weight sequences, the notation $A_p\subset M_p$ means %$A_p\leq CL^pM_p$, $p\in\mathbb N$, for some $C,L>0$. 

%%
\subsection{Associate and subordinate functions}

As in \cite{Komatsu1},  the associated function for the weight sequence $(M_p)$ is defined by
\begin{equation}\label{assoc-fun-sec}
M(\rho):=\sup_{p\in\mathbb N}\ln_+  (\rho^{p}/M_{p}),\quad \rho> 0.
\end{equation}
It is a non-negative, continuous, monotonically non-decreasing function, which vanishes for sufficiently small $\rho>0$ and increases more rapidly than $(\ln \rho)^s$, when $\rho$ tends to infinity, for any $s\in\mathbb N$. %When $M_p=p!^{\sigma}$, $\sigma>0$, we have $M(\rho)\asymp \rho^{1/\sigma}$.% Employing the inequality $(\lambda+\rho)^p\leq 2^p\lambda^p+2^p\rho^p$, $\lambda,\rho\geq 0$, $p\in\mathbb Z_+$, one easily verifies the bound
By \cite{Komatsu1}, one can deduce that condition (M.2) implies
\begin{equation*}\label{B-c}
\forall h>0, \;\exists k_1>0, \;\exists k_2>0, \quad  k_1M(\rho)\leq M(h \rho)\leq k_2 M(\rho), \;\;\rho>0.
\end{equation*}

By $\mathfrak{R}$ is denoted the set of all positive monotonically increasing sequences $r_i,$ $i\in\mathbb Z_+$, in the sequel denoted as $(r_i),$ such that $r_i\rightarrow \infty$, as $i\rightarrow \infty$. With the partial order relation $(r_i)\leq (k_i)$ if $r_i\leq k_i$,  $\forall i\geq i_0$ for some $i_0\in\mathbb Z_+$,  $(\mathfrak{R},\leq)$ becomes a directed set (both upwards and downwards directed). 
%For $(r_i)\in\mathfrak{R}$, we denote by $N_{(r_i)}(\cdot)$ the associated function to the sequence $N_p=M_p\prod_{i=1}^pr_i$, $p\in\mathbb N$ 
Although, for given $(r_i)\in\mathfrak{R},$ a sequence $N_p=M_p\prod_{i=1}^pr_i$,  $p\in\mathbb N$, may fail to satisfy (M.2), we can still define its associated function as in \eqref{assoc-fun-sec}.
%Let  $(r_i)\in\mathfrak{R}$. Then the  sequence $(N_p)$ is defined by $N_0=1$, $N_p=M_p\prod_{i=1}^p r_i$, $p\in\mathbb N$ 
The associated function corresponding to the sequence $(N_p)$ is denoted by $ N_{(r_i)}(\rho)=\sup_{p\in\mathbb N}\ln_{+}({\rho^p}/{N_p}),$ $\rho>0.$ Note,%  that for given $(r_i)$ and every $k > 0$ there is $\rho_0 > 0$ such that 
\begin{align*}
\forall (r_i)\in \mathfrak{R},\,\forall k>0, \,\exists \rho_0>0,\;\; N_{(r_i)}(\rho)\leq kM(\rho), \;\rho > \rho_0.
\end{align*}
Moreover,
% for every $(r_i)\in\mathfrak{R}, c_1>0, c_2>0$ there exist $\tilde{r}_i$ and $\rho_0>0$ such that 
\begin{equation*}\label{Ra}
\forall (r_i)\in\mathfrak{R}, \,\forall c_1,c_2>0, \,\exists (\tilde{r}_i)\in\mathfrak R,\;\; N_{(r_i)}(c_1\rho)\leq c_2N_{(\tilde r_i)}(\rho),\; \rho>0.
\end{equation*}
 
We denote by  $c$  a subordinate function  corresponding to $(r_i)\in\mathfrak R$, i.e. to $N_p=M_p\prod_{i=1}^pr_i,$ $p\in\mathbb{N},$ defined by $M(c(\rho))=N_{(r_i)}(\rho), $ $\rho>0.$ We will also use notation $c_{(r_i)}(\rho),$ $\rho>0$ to  distinguish subordinate functions which correspond to different $(r_i)\in \mathfrak{R}.$ Similarly,  we use notation $\tilde c(\rho)$ to denote a subordinate function for $(\tilde r_i)\in \mathfrak{R}$ (so  $c_{(\tilde r_i)}(\rho)=\tilde c(\rho),$ $\rho>0,$ $M(\tilde c(\rho))=N_{(\tilde r_i)}(\rho)$).

Important  properties of  subordinate functions are given in \cite{Komatsu1},  see also  \cite{KMMP}.
We note,
\begin{gather*}
c_{(k_i)}(\rho/k)=c_{(k_i/k)}(\rho),\;\rho>0,\;k>0,\\
(b_i)\leq (k_i)\quad \Rightarrow\quad N_{(k_i)}(\rho)\leq N_{(b_i)}(\rho)\quad\Rightarrow\quad c_{(k_i)}(\rho)\leq c_{(b_i)}(\rho),\quad \rho>0.
\end{gather*}
%\begin{equation*}
%\forall (r_i)\in\mathfrak{R}, \,\exists k_1>0, \,\exists k_2>0, \,\exists (\tilde r_i)\in\mathfrak{R}, \; \;k_1\tilde c(\rho) \leq c(\rho) \leq k_2 \tilde c(\rho),\; \rho>0.
%\end{equation*}
%($c(\rho)=c_{r_i}(\rho), \tilde c(\rho)=c_{\tilde r_i}(\rho)$).  
%Thus,
%\begin{equation*}\label{Res}
%\forall (r_i)\in\mathfrak{R},  \,\exists s_1>0, \exists s_2>0, \,\exists (\tilde r_i)\in\mathfrak{R},\;\;
%s_1 N_{(\tilde r_i)}(\rho)\leq N_{(r_i)}(\rho)\leq s_2 N_{(\tilde r_i)}(\rho), \;\rho>0. 
%\end{equation*}
%\textbf{NB.  za ovo u zagradi $c(\rho)=c_{r_i}(\rho), \tilde c(\rho)=c_{\tilde r_i}(\rho)$ nisam sigurna \v sta je, jer nismo imali oznaku $c_{r_i}$.}

%%
\subsection{Test spaces and spaces of ultradistributions}
\label{sec1.3}

Let $\Omega\subset \mathbb{R}^d$ be an open set.  We consider in the whole paper compact subsets of $\Omega,$ $K\subset\subset \Omega,$ so that they are regular compact sets (cf. \cite{Komatsu1}).  The space $\mathcal{E}^{M_p,h}(K), $ $h>0$, respectively,  $\mathcal{E}^{M_p,(r_i)}(K), $ $(r_i)\in\mathfrak R,$ consists of all $\phi\in C^\infty(\Omega)$ such that
\begin{equation*}
\|\phi\|_{\mathcal{E}^{M_p,h}(K)}=\sup_{x\in K,\alpha\in \mathbb{N}^d}\frac{|\phi^{(\alpha)}(x)|}{h^{|\alpha|} M_{\alpha}}<\infty,
\end{equation*}
respectively,
\begin{equation*}
\|\phi\|_{\mathcal{E}^{M_p,(r_i)}(K)}=\sup_{x\in K,\alpha\in \mathbb{N}^d}\frac{|\phi^{(\alpha)}(x)|}{M_{\alpha}\prod_{i\leq|\alpha|}r_i}<\infty.
\end{equation*}

Since we deal always with the sequence $(M_p)$, we will use below notation $\|\phi\|_{K,h}$, respectively $\|\phi\|_{K,(r_i)}$.  If it is crucial to emphasize the use of the sequence $(M_p),$ we will denote it as $\|\phi\|_{K,M_p,h}$ and  $\|\phi\|_{K,M_p,(r_i)}$ for the respective cases. The space of $(M_p)$-, respectively,  $\{M_p\}$-ultradifferentiable functions is defined as
\begin{equation*}
\mathcal{E}^{(M_p)}(\Omega)=\varprojlim_{K\subset\subset\Omega}\varprojlim_{h\to 0}\mathcal{E}^{M_p,h}(K),\; \text{respectively, }\;
\mathcal{E}^{\{M_p\}}(\Omega)=\varprojlim_{K\subset\subset\Omega}\varprojlim_{(r_i)\in\mathfrak R}\mathcal{E}^{M_p,(r_i)}(K).
\end{equation*}
Note that for the definition of the spaces given above we need only conditions (M.1) and (M.2), while for the next ones, with compactly supported test functions, we use (M.3).  Actually a weaker condition (M.3)' is sufficient but since we will use the so-called second structural theorem \cite[p.~91]{Komatsu1}, the condition (M.3) is assumed.

The subspace of $\mathcal{E}^{M_p,h}(K)$, respectively, $\mathcal{E}^{M_p,(r_i)}(K)$, consisting of elements with supports in $K\subset\subset\Omega$ is denoted as $\mathcal{D}^{M_p,h}(K)$, respectively, $\mathcal{D}^{M_p,(r_i)}(K)$ and we set
\begin{align*}
\mathcal{D}^{(M_p)}(K)=\varprojlim_{h\to 0}\mathcal{D}^{M_p,h}(K),&\quad 
\mathcal{D}^{\{M_p\}}(K)=\varprojlim_{(r_i)\in\mathfrak{R}}\mathcal{D}^{M_p,(r_i)}(K),
\\
\mathcal{D}^{(M_p)}(\Omega)=\varinjlim_{K\subset\subset\Omega}\mathcal{D}^{(M_p)}(K),&\quad 
\mathcal{D}^{\{M_p\}}(\Omega)=\varinjlim_{K\subset\subset\Omega}\mathcal{D}^{\{M_p\}}(K).
\end{align*}
%Naturally, the non-triviality of these spaces of compactly supported functions is equivalent %to $(M.3)'.$ 

Their duals $\mathcal{D}'^{(M_p)}(\Omega)$ and $\mathcal{D}'^{\{M_p\}}(\Omega)$ are the ultradistribution spaces of class $(M_p)$ (Beurling type) and class $\{M_p\}$ (Roumieu type), respectively. We refer to \cite{Komatsu1,Komatsu2,KMMP} for the properties of these spaces. 

We denote by $\mathcal{S}^{M_p,h},$ $h>0,$ the Banach space of all $\varphi\in C^\infty(\mathbb{R}^d)$ for which the norm
\begin{align*}
\sup_{\alpha\in \mathbb{N}^d}\frac{h^{|\alpha|}\|e^{M(h|\cdot|)}D^\alpha\varphi\|_{L^\infty(\mathbb{R}^d)}}{M_\alpha}<\infty,
\end{align*}
and define the space of tempered ultradifferentiable functions of class $(M_p),$ respectively, $\{M_p\},$ as  $\mathcal{S}^{(M_p)}(\mathbb{R}^d)=\varprojlim\limits_{h\to \infty}S^{M_p,h},$ respectively, $\mathcal{S}^{\{M_p\}}(\mathbb{R}^d)=\varinjlim\limits_{h\to 0}S^{M_p,h}.$ 
Actually, we will use the equivalent definition,
\begin{equation*}
    \mathcal{S}^{\{M_p\}}(\mathbb R^d)=\varprojlim_{(r_i)\in\mathfrak{R}}\mathcal{S}^{M_p,(r_i)}.
\end{equation*}
As a custom, one employs the notation $*$ jointly for $(M_p)$ and $\{M_p\}$ to treat both cases simultaneously. 

Corresponding strong dual $\mathcal{S}'^*(\mathbb{R}^d)$ is the spaces of tempered ultradistributions. The Fourier transform is a topological isomorphism on $\mathcal{S}^*(\mathbb{R}^d)$ and on $\mathcal{S}'^*(\mathbb{R}^d)$.

We recall (see \cite{Komatsu1} and also \cite{KMMP}) that $P(D)$ is an $(M_p)$-, respectively, $\{M_p\}$-ultradifferential operator if it is of the form: $P(D)=\sum_{\alpha\in\mathbb{N}^d}a_\alpha D^\alpha,$ $a_\alpha\in\mathbb{C},$ where $|a_\alpha|\leq CL^{|\alpha|}/M_\alpha,$ $\alpha\in \mathbb{N}^d,$ for some $L>0$ and $C>0,$ respectively, for every $L>0$ there exists $C>0$.  Then the symbol $P(\xi)=\sum_{\alpha\in\mathbb{N}^d}a_\alpha \xi^\alpha,$ $\xi\in\mathbb{C}^d$ is the corresponding $(M_p)$-, respectively, $\{M_p\}$-ultrapolynomial.  

Following classes of Beurling, respectively, Roumieu type ultradifferential operators are important for Theorem \ref{4-2} of last section.
%\begin{equation}\label{11}
% P_r(D)=(1+D_1^2+\ldots+D_d^2)^l\prod_{p=1}^\infty\left(1+\frac{D^2_1+\ldots +D^2_d}{r^2p^{2t}}\right)=\sum_{p=0}^\infty a_p D^p, 
%\end{equation}
\begin{equation*}\label{z}
P_r(D)=(1+D_1^2+...+D_d^2)^l\prod_{i=1}^\infty\left(1+\frac{D^2_1+... +D^2_d}{r^2m_i^{2}}\right)=\sum_{\alpha\in\mathbb{N}^d}a_\alpha D^\alpha, \quad r>0, \; l\geq 0,
\end{equation*}
respectively,  of the form 
%\begin{equation}\label{22} 
%P_{r_p}(D)=(1+D_1^2+\ldots+D_d^2)^l\prod_{p=1}^\infty\left(1+\frac{D^2_1+\ldots +D^2_d}{r_p^2p^{2t}}\right)=\sum_{p=0}^\infty a_p D^p,
%\end{equation}
\begin{equation*}\label{zv}
P_{(r_i)}(D)=(1+D_1^2+...+D_d^2)^l\prod_{i=1}^\infty\left(1+\frac{D^2_1+... +D^2_d}{r_i^2m_i^2}\right)=\sum_{\alpha\in\mathbb{N}^d}a_\alpha D^\alpha, \;\;(r_i)\in\mathfrak R, \;l\geq 0,
\end{equation*}
where $m_i=M_i/M_{i-1},$ $i\in\mathbb{Z}_+.$ Then  the  corresponding ultrapolynomials (with $\xi_i$ instead of $D_i$) satisfy, in the Beurling case, 
\begin{align*} %\label{berling}
&\exists C, C_1, C_2>0, \;\exists h, h_1, h_2>0,\\
Ce^{hM(|\xi|)}\leq |P_r(\xi)|&\leq C_1 e^{ h_1M(|\xi|)},  \;\; \xi\in\mathbb R^d 
\;\mbox{ and }\; |a_\alpha|\leq C_2 h_2^{|\alpha|}/M_\alpha, \;\;\alpha\in\mathbb N^d.
\end{align*}
%By \cite[Lemma 3.10]{Komatsu}, we know that $N_{(r_p)}(|\xi|)=M(c(|\xi|))$, where $c(|\xi|), \ |\xi|>0$ is a subordinate function which corresponds to $(r_p)_p\in\mathcal{R}$. 
We know that  for given $(r_i)\in\mathfrak{R}$  and its subordinate function  $c(|\xi|),$ $\xi\in \mathbb R^d,$ there exists $C_1>0$ such that 
\begin{equation*}\label{vazno6}
C_1e^{M(c(|\xi|))}\leq |P_{(r_i)}(\xi)|, \quad\xi\in\mathbb R^d.
\end{equation*}

\subsection{Spaces of generalized ultradistributions} 
\label{sec 1.4}

Algebras of generalized ultradistributions in the non-quasianalytic case were considered by several authors  \cite{D.V.1,Del-Has-Pil-Val,DHPV}. In order not to make confusion, in the sequel  we follow the definitions in \cite{D.V.1}.  This approach gives a clear embedding results for classical ultradistribution spaces.  Recall, in the Beurling case,  ultramoderate nets are elements of
\begin{align*}
\mathcal{E}^{(M_p)}_M(\Omega) = \Big\{(g_{\varepsilon}) \in (\mathcal{E}^{(M_p)}(\Omega))^{(0,1)}\,:&\, \forall K\subset \subset \Omega, \forall h>0,  \exists k\geq 0, \\
& \| g_{\varepsilon}\|_{K,h} =O(e^{{M}({k}/{\varepsilon})})\Big\},
\end{align*}  
while the negligible nets are elements of
\begin{align*}
\mathcal{N}^{(M_p)}(\Omega) = \Big\{(g_{\varepsilon}) \in  (\mathcal{E}^{(M_p)}(\Omega))^{(0,1)}\,: &\,\forall K\subset \subset \Omega, \forall h>0, \forall k \geq 0,\\
&\|g_{\varepsilon}\|_ {K,h} = O(e^{-{M}({k}/{\varepsilon})} )\Big\}.
\end{align*}  
In the Roumieu case, % the definitions are the following
\begin{align*}
\mathcal{E}^{\{M_p\}}_M(\Omega) =  \Big \{(g_{\varepsilon}) \in  (\mathcal{E}^{\{M_p\}}(\Omega))^{(0,1)}: & \; \forall K\subset \subset \Omega, \; \forall (h_i)\in\mathfrak R,   \; \exists (k_i)\in\mathfrak R, \\
 &\| g_{\varepsilon}\|_{K,(h_i)} =O(e^{N_{(k_i)}(1/{\varepsilon})})\Big\},
\end{align*}
\begin{align*}
\mathcal{N}^{\{M_p\}}(\Omega) =  \Big\{(g_{\varepsilon}) \in  (\mathcal{E}^{\{M_p\}}(\Omega))^{(0,1)}: & \; \forall K\subset \subset \Omega, \; \forall (h_i)\in\mathfrak R, \;  \forall (k_i)\in\mathfrak R,\\
&   \| g_{\varepsilon}\|_{K,(h_i)} =O(e^{-N_{(k_i)}(1/{\varepsilon})})\Big\}.
\end{align*} 
Algebras of generalized $(M_p)$-, respectively, $\{M_p\}$-ultrabistributions are defined by 
\begin{align*}
\mathcal{G}^{(M_p)} (\Omega) =   \mathcal{E}^{(M_p)}_M (\Omega) /\mathcal{N}^{(M_p)}(\Omega), \quad \text{respectively,} \quad \mathcal{G}^{\{M_p\}} (\Omega) =   \mathcal{E}^{\{M_p\}}_M (\Omega) /\mathcal{N}^{\{M_p\}}(\Omega).
\end{align*}
Let $(E,(\mu_p))$ be a Fr\'echet vector space,  topologized by a family of seminorms $\mu_p ,$ $p\in  \mathbb N.$ We additionally assume that it is an algebra; so, $E$ is a Fr\'echet algebra. 
% In the definitions which  are to follow, we consider smooth functions $g_\varepsilon,$ $%\varepsilon\in(0,1)$ with values in $E,$ denoted by $g_\varepsilon\in C^\infty(\Omega,E),%$ $\varepsilon\in (0,1).$

Beurling case:
\begin{align*}
\mathcal{E}^{(M_p)}_M[C^\infty(\Omega,E)] = 
\Bigg \{(g_\varepsilon) \in (C^\infty&(\Omega,E))^{(0,1)}\,: \, \forall K\subset \subset \Omega,  \,\forall p\in\mathbb N, \,\forall h>0,  \,\exists k>0,\\
& \|g_\varepsilon\|_{K,h,p}=\sup_{x\in K, \alpha\in \mathbb{N}^d}\frac{\mu_p(g^{(\alpha)}_{\varepsilon}(x))}{h^{|\alpha|} M_\alpha} =O(e^{M(k/\varepsilon)})\Bigg\};
\end{align*}  
while the negligible nets are elements of % defined by
\begin{align*}
\mathcal{N}^{(M_p)}[C^\infty(\Omega,E)] =  \Big\{(g_\varepsilon) \in (C^\infty& (\Omega,E))^{(0,1)}: \, \forall K\subset \subset \Omega, \,\forall p\in\mathbb N, \,\forall h>0, \, \forall k>0,\\
&\|g_\varepsilon\|_{K,h,p}=O(e^{-M(k/\varepsilon)})\Big\};
\end{align*}  
Roumieu case:
\begin{align*}
&\mathcal{E}^{\{M_p\}}_M[C^\infty(\Omega,E)] =\Bigg \{(g_{\varepsilon}) \in (C^\infty(\Omega,E))^{(0,1)}:\;\; \forall K\subset \subset \Omega, \,\forall p\in\mathbb N, \\
& \forall (h_i)\in\mathfrak R, \,\exists (k_i)\in\mathfrak R, \;\; \|g_\varepsilon\|_{K,(h_i),p}=\sup_{x\in K, \alpha\in \mathbb{N}^d}\frac{\mu_p(g^{(\alpha)}_{\varepsilon}(x))}{M_\alpha \prod_{i\leq |\alpha|}h_i} =O(e^{N_{(k_i)}(1/{\varepsilon})})\Bigg\};
\end{align*}  
while the negligible nets are elements of % defined by
\begin{align*}
\mathcal{N}^{\{M_p\}}& [C^\infty(\Omega,E)] =\Big \{(g_{\varepsilon}) \in (C^\infty(\Omega,E))^{(0,1)}: \\
& \forall K\subset \subset \Omega,  \forall p\in\mathbb N,  \forall (h_i)\in\mathfrak R, \forall (k_i)\in\mathfrak R, \;\;\|g_\varepsilon\|_{K,(h_i),p} =O(e^{-N_{(k_i)}(1/{\varepsilon})})\Big\}.
\end{align*}  
Generalized algebras are defined by 
\begin{align*}
\mathcal{G}^{(M_p)} [C^\infty(\Omega,E)] & =   \mathcal{E}^{(M_p)}_M [C^\infty(\Omega,E)] /\mathcal{N}^{(M_p)}[C^\infty(\Omega,E)],\\
\mathcal{G}^{\{M_p\}} [C^\infty(\Omega,E)] & =   \mathcal{E}^{\{M_p\}}_M [C^\infty(\Omega,E)] /\mathcal{N}^{\{M_p\}}[C^\infty(\Omega,E)].
\end{align*}
As in usual Colombeau algebra, the class of a net $(g_{\varepsilon})$ is denoted as $[(g_{\varepsilon})]$.
 
\begin{remark} \label{tc}
If $E=\mathbb C$, one obtains algebras   of Beurling  and Roumieu generalized ultradistributions.

Constant functions  $(\varepsilon,x)\mapsto a_\varepsilon\in \mathbb{C}, $ $\varepsilon\in(0,1),$  $x\in \Omega,$  are elements of $C^\infty(\Omega)$.  So, one obtains $ \mathbb{C}^{(M_p)}$, respectively, $ \mathbb{C}^{\{M_p\}}$,  Beurling, respectively, Roumieu type generalized complex numbers (see \cite{DHPV,D.V.1}), denoted as $\mathbb C^*=\mathcal E^*_M/\mathcal N^*_\mathbb{C},$ where $\mathcal E_M^{(M_p)}$, respectively, $\mathcal E_M^{\{M_p\}}$ consists of nets  $(c_\varepsilon)$ with the property $|c_\varepsilon|=O(e^{M(k/\varepsilon)}),$ respectively, $|c_\varepsilon|=O(e^{N_{(k_i)}(1/\varepsilon)}),$ for some $k\in \mathbb R$, respectively, some $(k_i)\in\mathfrak R;$  $\mathcal N^{(M_p)}_\mathbb{C}$, respectively, $\mathcal N^{\{M_p\}}_\mathbb{C}$ consists of nets  $(c_\varepsilon)$ with the property $|c_\varepsilon|=O(e^{-M(k/\varepsilon)}),$ respectively,  $|c_\varepsilon|=O(e^{-N_{(k_i)}(1/\varepsilon)}),$ for every $k\in\mathbb R$, respectively, every $(k_i)\in\mathfrak R.$  Real version is $ \mathbb{R}^{(M_p)}$, respectively, $ \mathbb{R}^{\{M_p\}}$.
\end{remark}

%%%%%%%%%%%% 
\section{Sharp topologies}
%%%%%%%%%%%

%%
\subsection{Beurling case} \label{BC}

In order to define the Hausdorff topology on $\mathcal{G}^{(M_p)} (\Omega),$ we first  note that
\begin{align*}
\bigcap_{K\subset\subset \Omega,k>0,h>0} \left\{(g_{\varepsilon})\,:\,\|g_{\varepsilon}\|_{K,h} = O(e^{-{M}({k}/{\varepsilon})})\right\}=\mathcal N^{(M_p)}(\Omega).
\end{align*}
Thus,  the basis of the zero in $\mathcal{G}^{(M_p)} (\Omega)$ consists of sets
\begin{equation*}
B_{K,h}^k = \left\{[(g_{\varepsilon})]: \|g_{\varepsilon}e^{{M}({k}/{\varepsilon})}\|_{K,h} =O(1)\right\}, \quad K\subset\subset \Omega , \;\;h>0 ,\;\;k>0. 
\end{equation*}
Choosing an increasing sequence of compact sets $K$ so that the union of their interiors equals $\Omega$ and  decreasing sequences of $h>0$ and $k>0,$ one obtains a countable basis of neighbourhoods of zero and the corresponding sharp topology. In particular, $\mathcal{G}^{(M_p)} (\Omega)$ is a topological algebra over the ring   $\mathbb{C}^{(M_p)}$. 

More generally, we define the basis of neighbourhoods in $\mathcal G^{(M_p)}[C^\infty(\Omega,E)]$,  where $E$ is a Fr\'echet algebra topologized by a family $\mu_p, $ $p\in \mathbb N,$  so that in the above definition $\|g_{\varepsilon}\|_{K,h},$ $\varepsilon\in (0,1)$ are replaced by $ \|g_\varepsilon\|_{K,h,p},$ $\varepsilon\in(0,1), $ $K\subset\subset \Omega ,$ $h>0, $ $ p\in \mathbb N.$ So we obtain the basis of neighbourhoods
\begin{align*}
B_{K,h}^{k,p} = \left\{[(g_{\varepsilon})]: \|g_{\varepsilon}e^{{M}({k}/{\varepsilon})}\|_{K,h,p} =O(1)\right\}, \quad K\subset\subset \Omega ,\;\; p\in \mathbb N, \;\;h>0 ,\;\;k>0. 
\end{align*}
    
In the same way one defines the sharp Hausdorff topology on the algebra of Beurling generalization functions with values in the Fr\'echet algebra $E$ by the use of representatives and the basis of neighbourhood for the representatives.
%$ \{[(g_{\varepsilon})]: \|g_{\varepsilon}e^{{M}({k}/{\varepsilon})}\|_{K,h,i}.$  
As previously, we have a countable basis of neighbourhoods of zero.  %We have the following proposition:
  
%\begin{proposition}\label{B-pro}
% $\mathcal{G}^{(M_p)}[C^\infty(\Omega,E)]$ is a topological module over $\mathcal{G}^{(M_p)}(\Omega)$.  Moreover,  $\mathcal{G}^{(M_p)}[C^\infty(\Omega,E)]$ is  a topological algebra.  In particular,  $\mathbb{C}^{(M_p)}$ is a topological ring. 
%\end{proposition}

%\begin{remark}\label{B-rem}
%Likewise, applying the proofs of corresponding assertions in \cite{D.V.1}, one has the next assertion.
%\begin{enumerate}
%\item If $P(D)$ is an $(M_p)$-ultradifferentiable linear operator, it is continuous for the above defined sharp topology.
%\item With the definition of sharp topology one can introduce in a natural way the notion of a well-posed problem for the corresponding linear problem (linear equation).
%\end{enumerate}
%\end{remark}

%%
\subsection{Roumieu case}    

The basis of  neighbourhoods of zero in $\mathcal G^{\{M_p\}}(\Omega)$ consists of  sets
\begin{align*}
B^{(k_i)}_{K,(h_i)}=\left\{[(g_{\varepsilon})]\in\mathcal G^{\{M_p\}}(\Omega):\|g_{\varepsilon}e^{{N}_{k_i}({1}/{\varepsilon})}\|_ {K,(h_i)}=O(1)\right\},
\; K\subset\subset \Omega, \; (k_i), (h_i)\in\mathfrak{R}.
\end{align*}

%The same assertion and remark,  as Proposition \ref{B-pro} and Remark \ref{B-rem}, with the substitute of positive  constants $k$ and $h$ with sequences $(k_i)$ and $(h_i)$ in $\mathfrak R$, holds true in the Roumieu case. 
We formulate more general versions with $E$ being  a Fr\'echet algebra.  Now the basis of neighbourhoods for  the Hausdorff sharp  topology is
\begin{align*}
B^{k_i,p}_{K,h_i}=&\left\{ [(g_{\varepsilon})] \in\mathcal G^{\{M_p\}}[C^\infty(\Omega,E)]: \|g_\varepsilon e^{{N}_{(k_i)}(1/\varepsilon)}\|_{K,(h_i),p}=O(1)\right\}, \\
 &\hspace{5cm} K\subset\subset \Omega, \; p\in\mathbb N, \; (k_i), (h_i)\in\mathfrak{R}.
\end{align*}
Note that contrary to what happens in Beurling case we do not have a countable base of neighbourhoods of zero.  %As in Section \ref{BC}, we have  the next proposition and remark.

%\begin{proposition}\label{B-R-pro}
%$\mathcal{G}^{\{M_p\}}[C^\infty(\Omega,E)]$  is a topological module over $\mathcal{G}^{\{M_p\}}(\Omega)$.  Moreover, $\mathcal{G}^{\{M_p\}}[C^\infty(\Omega,E)]$ is  a topological algebra.  In particular, if $E=\mathbb C$, then $\mathbb{C}^{\{M_p\}}$ is a topological ring. 
%\end{proposition}

%%
\subsection{Continuity results}    

Certain non-linear operations can be shown that they are continuous with respect to introduced sharp topologies. We will show this for polynomials.  Let $P$ be a polynomial of the form $P(s)=\sum_{i\leq B}c_is^i, $ $s\in\mathbb R^d, $  i.e.  $P((g_\varepsilon))=(P(g_\varepsilon))=\left(\sum_{i\leq B}c_i g_\varepsilon^i\right), $ $(g_\varepsilon)\in \mathcal E_M^*(\Omega),$ $c_i\in\mathbb C, $ $i=0,\dots,B,$ $B\in\mathbb Z_+.$  Then the mapping $[(g_\varepsilon)]\to [P((g_\varepsilon))], $ $\mathcal G^*(\Omega)\to \mathcal G^*(\Omega)$ is continuous.  Let us show this. There holds
\begin{align*}
P(g_\varepsilon)-P(f_\varepsilon)=(g_\varepsilon-f_\varepsilon) P_0(g_\varepsilon,f_\varepsilon),\;\;\varepsilon\in(0,1),
\end{align*} 
where $P_0$ is a sum of products of $(g_\varepsilon^k)$  and $(f_\varepsilon^s)$,  $s,k\leq B$ ($g=[(g_\varepsilon)], f=[(f_\varepsilon)]$).  Since $(g_\varepsilon-f_\varepsilon)\in \mathcal{N}^*(\Omega)$ the continuity of a polynomial mapping follows.  If $[(g_\varepsilon)]=[(f_\varepsilon)],$ then $[(P(g_\varepsilon)-P(f_\varepsilon))]=0$ in $\mathcal{G}^\ast(\Omega).$

\begin{remark}\label{main}
\begin{enumerate}
\item $\mathcal{G}^{*}[C^\infty(\Omega,E)]$  is a topological module over $\mathcal{G}^{*}(\Omega)$.  Moreover, $\mathcal{G}^{*}[C^\infty(\Omega,E)]$ is  a topological algebra.  In particular, if $E=\mathbb C$, then $\mathbb{C}^{*}$ is a topological ring. 
\item If one considers constant functions $[(c_\varepsilon)],$  $c_\varepsilon(x)=c_\varepsilon\in E,$ $\varepsilon\in (0,1),$ in both cases (which means that in this case $\Omega $ is irrelevant) we obtain strong simplifications: 
\begin{align*}
\mathcal E^{(M_p)}_M[E] & =\left\{(c_\varepsilon): \forall p\in\mathbb N, \;\exists k>0, \;\mu_p(c_\varepsilon)=O(e^{M(k/\varepsilon)})\right\},\\
\mathcal E^{\{M_p\}}_M[E] & =\left\{(c_\varepsilon): \forall p\in\mathbb N, \;\exists (k_i) \in\mathfrak{R}, \;\mu_p(c_\varepsilon)=O(e^{N_{(k_i)}(1/\varepsilon)})\right\},
 \end{align*}
and the corresponding definitions for $\mathcal N^{(M_p)}[E]$, respectively,   $\mathcal N^{\{M_p\}}[E].$ In this way one obtains  $\mathcal G^{(M_p)}[E]=\mathcal{E}^{(M_p)}_M[E]/\mathcal{N}^{(M_p)}[E]$, respectively,  $\mathcal G^{\{M_p\}}[E]=\mathcal{E}^{\{M_p\}}_M[E]/\mathcal{N}^{\{M_p\}}[E].$
\item If $P(D)$ is an $*$-ultradifferentiable linear operator, the operator $P(D)\colon \mathcal{G}^*(\Omega)\to \mathcal{G}^*(\Omega)$ is continuous for the above defined sharp topology.
\item With the definition of sharp topology one can introduce in a natural way the notion of a well-posed problem for the corresponding linear and non-linear equations.
\end{enumerate}
\end{remark}

%\begin{remark}\label{cat} As in Proposition \ref{B-R-pro} we can consider the cases when $\Omega$ is irrelevant i.e.  when we consider polynomials $P:\mathcal G^*[C^\infty(\Omega,E)]\rightarrow\mathcal G^*[C^\infty(\Omega,F)],$ where  $F$ is topologized by a family $\nu_j , $ $j \in \mathbb N$ of seminorms.  Again, if $[(g_\varepsilon-f_\varepsilon)]=0$ in $\mathcal{G}^\ast [C^\infty(\Omega,E)],$ then $[P(g_\varepsilon)-P(f_\varepsilon)]=0$ in $\mathcal{G}^\ast [C^\infty(\Omega,F)].$
%\end{remark}

%%%%%%%
\section{Regular elements and strong association}
\label{s3}
%%%%%%%

We recall the notion of regular $*$-generalized function (where $*$ is $(M_p)$ or $\{M_p\}$ class) \cite{D.V.1}:

A generalized Beurling ultradistribution $[(g_\varepsilon)]$ is an element of $\mathcal{G}^{\infty,(M_p)}(\Omega)$ ($(M_p)$-regular) if it has a representative in  $\mathcal{E}^{\infty,(M_p)}(\Omega)$, where
\begin{align*}
\mathcal{E}^{\infty,(M_p)}(\Omega) =\left\{(g_{\varepsilon}) \in \mathcal{E}^{(M_p)}_M (\Omega): \forall K \subset \subset \Omega,\exists k>0 ,  \forall h>0,\; \|g_{\varepsilon}\|_ {K,h} = O(e^{M({k}/{\varepsilon})})\right\}.
\end{align*}

A generalized Roumieu ultradistribution $[(g_\varepsilon)]$ is an element of $\mathcal{G}^{\infty,\{M_p\}}(\Omega)$ ($\{M_p\}$-regular) if it has a representative in  $\mathcal{E}^{\infty,\{M_p\}}(\Omega)$, where
\begin{align}
\mathcal{E}^{\infty,\{M_p\}}(\Omega)\ = \Big\{&(g_{\varepsilon}) \in \mathcal{E}_M^{\{M_p\}}(\Omega):\nonumber\\
&\label{RDo}
 \forall K \subset\subset \Omega,\   \exists (k_i) \in \mathfrak{R},\;  \forall (h_i) \in \mathfrak{R}, \; \|g_{\varepsilon}\|_ {K,(h_i)} = O(e^{N_{(k_i)}(1/\varepsilon)}) \Big\}.
\end{align}
In both cases we call such elements regular generalized ultradistributions (of corresponding type).

Well-known notion of strong association in Colombeau algebra is easily transferred in the case of Beurling generalized functions. This is not the case for the Roumieu generalized functions.

Beurling case: A generalized ultradistribution $g=[(g_{\varepsilon})]\in\mathcal G^{(M_p)}(\Omega)$ is  strongly $B$-associated to an ultradistribution $T\in\mathcal D'^{(M_p)}(\Omega)$ if for any compact set $K\subset\Omega$, there exists $b >0,$  such that for  any test function $\phi\in\mathcal D^{(M_p)}(K)$, 
\begin{align} \label{bc1}
\left|\left\langle g_{\varepsilon} -T, \phi\right\rangle \right| = O\left(e^{-{M}({b}/{\varepsilon})}\right). 
\end{align}

%  Moreover,  in the Roumieu case, for the proof of regularity in Theorem \ref{r-theorem}, we have to assume an additional assumption.

Roumieu case:
Generalized ultradistribution $g=[(g_{\varepsilon})]\in\mathcal G^{\{M_p\}}(\Omega)$, is  weakly $R$-associated to an ultradistribution $T\in\mathcal D'^{\{M_p\}}(\Omega)$  if for any compact set $K\subset\Omega$ there exists
%%%%%%%%%%%%%%%%%%%
%%%%%%%%%%%%%%%%%%%%here exists 
$(b_i)\in\mathfrak R$ such that for every  $\phi\in\mathcal D^{\{M_p\}}(K),$
% there exist   $C>0$  and $\varepsilon_0\in(0,1),$ such that
\begin{equation}\label{RC}
\left|\left\langle g_{\varepsilon} -T,\phi\right\rangle\right|=O\left(e^{-N_{(b_i)}(1/\varepsilon)}\right).
\end{equation}
The essential difference between the Beurling case and the Roumieu case is that one can not use the arguments of Baire theorem in the Roumieu case. Because of that we introduce the notion  of strong  $R$-association.

Generalized ultradistribution
$g=[(g_{\varepsilon})]\in\mathcal G^{\{M_p\}}(\Omega)$, is strongly $R$-associated to an ultradistribution $T\in\mathcal D'^{\{M_p\}}(\Omega)$  if for any compact set $K\subset\Omega$
there exist   $(\ell_i), (b_i)\in\mathfrak R$,  such that for every $\phi\in\mathcal D^{\{M_p\}}(K),$
\begin{equation}\label{RCpr}
\left|\left\langle g_{\varepsilon} -T,\phi\right\rangle\right|=O\left(e^{-N_{(b_i)}(1/\varepsilon)}\|\phi\|_{K,(\ell_i)}\right).
\end{equation}

\begin{remark}\label{baire}
\begin{enumerate}
\item[(i)]
If  $T$ is compactly supported, i.e., $T\in\mathcal E'^*(\Omega)$,
then in (\ref{bc1}) and (\ref{RCpr}) one can put 
$\phi\in\mathcal E^{(M_p)}(K),$ and 
$\phi\in\mathcal E^{M_p,(\ell_i)}(K),$ respectively. (As we noted, $K$ is always a regular compact set.)
\item[(ii)]
In the Beurling  case,  by the Baire theorem, we can see that the positive real $b$ and $\varepsilon_0$ can be chosen to be the same for all test functions  contained in a neighbourhood of zero in  $\mathcal D^{(M_p)}(K)$. So the equivalent definition in Beurling case is: For any compact subset $K$ of $\Omega$ there exist positive constants $b $ and $\ell $ such that 
\begin{equation}\label{uss} 
|\langle g_{\varepsilon} -T, \phi\rangle| = O\left(e^{-M(b/\varepsilon)} \|\phi\|_{K,\ell}\right),\quad \phi\in\mathcal D^{(M_p)}(K).
\end{equation}
\end{enumerate}
The proof follows from the Baire theorem applied to the sets 
$E_n := \{\phi \in \mathcal D^{(M_p)}(K) : |\langle g_\varepsilon - T, \phi\rangle | \leq e^{ - M( 1 /n\varepsilon)}, \;\varepsilon \leq 1/n\}$ in the complete metric space $\mathcal D^{(M_p)}(K).$ Since $\bigcup_{n\in\mathbb{N}} E_n=\mathcal{D}^{(M_p)}(K)$, there exist $m$, $\ell\in\mathbb{N},$ $\phi_0\in\mathcal{D}^{(M_p)}(K),$ $r>0$ and 
\begin{equation*}
    V_\ell=\left\{\phi\in\mathcal{D}^{(M_p)}(K):\|\phi-\phi_0\|_{K,l}<r\right\},
\end{equation*}
so that $V_\ell\subset E_m.$ 
So, for any $\phi\in \mathcal D^{(M_p)}(K), \phi\neq 0,$  and $\lambda=r/(2||\phi||_{K,\ell})$, there holds
$\lambda\phi+\phi_0\in V_\ell$
and
\begin{equation*}
    |\langle g_\varepsilon-T,\lambda\phi+\phi_0 \rangle| e^{M(1/m\varepsilon)}\leq 1/m, \; |\langle g_\varepsilon-T,\phi_0 \rangle|e^{M(1/m\varepsilon)}\leq 1/m, \;\;\varepsilon\leq 1/m.
\end{equation*}
This, with 
\begin{align*}
|\langle g_\varepsilon-T,\phi \rangle|e^{M(1/m\varepsilon)}
&=\frac{1}{\lambda}|\left\langle g_\varepsilon-T,\lambda\phi+\phi_0 \right\rangle-
\langle g_\varepsilon-T,\phi_0 \rangle|e^{M(1/m\varepsilon)}\\
&\leq \frac{2}{m\lambda}\leq \frac{4}{rm}||\phi||_{K,\ell},
\end{align*}
gives (\ref{uss}).%\end{enumerate}
\end{remark}
Relation between  $(f_\varepsilon)\in\mathcal N^*(\Omega)$ and $(\langle f_\varepsilon,\phi\rangle)\in\mathcal N_{\mathbb C}$ for any $\phi\in\mathcal D^*(\Omega),$ is illustrated in the next example.
\begin{example}\label{e1} Let $g_\varepsilon(t)=e^{it/\varepsilon},$ $t\in \mathbb{R},$ $\varepsilon\in (0,1).$ Clearly $(g_\varepsilon)\notin \mathcal{N}^*$. On the other hand, $(\langle g_\varepsilon,\phi\rangle)\in \mathcal{N}^*_\mathbb{C}$ for every $\phi\in\mathcal D^*(\mathbb R).$ 
Let us show this. For every
$\phi\in\mathcal D^*(\mathbb R),$
\begin{align*}
\langle g_\varepsilon(t),\phi(t)\rangle=\hat\phi(1/\varepsilon),\quad\varepsilon<1.
\end{align*}
Since $ \hat\phi\in \mathcal{S}^*(\mathbb R)$, for every $p>0,$ respectively, $(p_i)\in \mathfrak{R},$ and corresponding $C>0$  and $\varepsilon_0\in(0,1),$ there holds
\begin{align*}
\hat\phi(1/\varepsilon)\leq Ce^{-M(p/\varepsilon)},\;\varepsilon<\varepsilon_0,\;\text{ respectively, }\;\hat\phi(1/\varepsilon)\leq Ce^{-N_{(p_i)}(1/\varepsilon)},\;\varepsilon<\varepsilon_0.
\end{align*}
Thus,  $(\langle g_\varepsilon,\phi\rangle)\in \mathcal{N}^*_\mathbb{C}$ for every $\phi\in\mathcal D^*(\mathbb R).$ 
\end{example}

\begin{remark}\label{r*} 
 Mollifiers $(\phi_\varepsilon)$  of the form $\phi_\varepsilon=1/\varepsilon^d \phi(\cdot/\varepsilon), $ $\varepsilon\in(0,1)$,   $\mathcal F(\phi)\in \mathcal{D}^{(M_p)}(\mathbb{R}^d),$ respectively,  $\mathcal F(\phi)\in \mathcal{D}^{\{M_p\}}(\mathbb{R}^d),$  and $\mathcal{F}(\phi)=1$ in a neighbourhood of zero are introduced and used in \cite{D.V.1}. (Actually, they are defined there in a more general form.) The following assertion is proved in \cite{D.V.1}: If  the image of a Beurling, respectively, Roumieu,  ultradistribution $f$  embedded  into the space of generalized Beurling, respectively, generalized Roumieu ultradistributions, through the convolution $(f*\phi_\varepsilon)$
% $\phi\geq 0$, 
is regular (i.e.  $(f*\phi_\varepsilon)\in\mathcal{E}^{\infty,(M_p)}(\Omega)$, respectively, $(f*\phi_\varepsilon)\in\mathcal{E}^{\infty,\{M_p\}}(\Omega)$), then $f$ is an ultradifferentiable function of the same kind,  i.e.   $f\in\mathcal E^{(M_p)}(\Omega)$, respectively, $f\in\mathcal E^{\{M_p\}}(\Omega)$.
\end{remark}

For the Roumieu case  the next  proposition relates the weak and strong $R$-association.  
%A Roumieu ultradistribution is weakly $R$-associated to its embedding in the space of generalized Roumieu ultradistribution.   
Since  $\Omega\to\mathcal{G}^*(\Omega)$ is a fine and supple sheaf of differential algebras (\cite[Proposition 4.3]{D.V.1}), in the sequel we consider in the proofs compactly supported ultradistributions. 

\begin{proposition}\label{RUM}
\begin{enumerate}
\item[a)] Let $T\in\mathcal E'^{\{M_p\}}(\mathbb R^d)$ and $[(g_\varepsilon)]\in\mathcal G^{\{M_p\}}(\mathbb R^d)$ be given by its regularization $g_\varepsilon= T*\phi_\varepsilon$,  $\varepsilon\in (0,1),$ where $\phi_\varepsilon, $ $\varepsilon \in (0,1)$, is the mollifier as it is stated above.  Then  $T$ and $[(g_\varepsilon)]$ are weakly $R$-associated (cf.  \eqref{RC}).
\item[b)] Assume that in \eqref{RC} the sequence $(b_i)$ has the property that the sequence 
\begin{align*}
M^1_p=M_p\prod_{i=1}^pb_i, \quad p\in\mathbb N,
\end{align*}
also satisfies conditions (M.1) and (M.2) (so $M_p^1$,  $p\in\mathbb N$ satisfies (M.1)-(M.3)).  Then,  there exist  $(h_i)\in\mathfrak R$,  $(\tilde b_i)\in\mathfrak R$,  $C>0$ and $\varepsilon_0\in(0,1)$ such that
\begin{equation*}\label{uslrum}
|\langle g_\varepsilon-T,\theta\rangle|\leq Ce^{-N_{(\tilde b_i)}(1/\varepsilon)}||\theta||_{K,(h_i)}, \quad \varepsilon\in(0,\varepsilon_0),\; \theta\in\mathcal E^{\{M_p\}}(K).
\end{equation*}
\end{enumerate}
\end{proposition}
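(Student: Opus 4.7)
For part (a), I would use the convolution identity
\begin{equation*}
\langle g_\varepsilon - T, \theta\rangle = \langle T, \check\phi_\varepsilon * \theta - \theta\rangle,
\end{equation*}
and since $T\in\mathcal E'^{\{M_p\}}(\mathbb R^d)$ is bounded by some seminorm $\|\cdot\|_{K'',(h_i)}$ with $K''\supset\supp T$ and $(h_i)\in\mathfrak R$, this reduces the task to estimating $\|\check\phi_\varepsilon * \theta - \theta\|_{K'',(h_i)}$. On the Fourier side, $\mathcal F(\check\phi_\varepsilon * \theta - \theta)(\xi) = (\hat\phi(-\varepsilon\xi)-1)\hat\theta(\xi)$ vanishes for $|\xi|\leq c/\varepsilon$ because $\hat\phi\equiv 1$ near the origin. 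Combining Fourier inversion for the derivatives with the Roumieu Paley--Wiener decay $|\hat\theta(\xi)|\leq C_K\|\theta\|_{K,(t_i)}e^{-N_{(t_i)}(|\xi|)}$ (valid for any $(t_i)\in\mathfrak R$) and the companion inequality $|\xi|^{|\alpha|}\leq M_\alpha\prod_{i\leq|\alpha|}s_i\cdot e^{N_{(s_i)}(|\xi|)}$, the estimate reduces to integrals of the form $\int_{|\xi|>c/\varepsilon} e^{N_{(s_i)}(|\xi|)-N_{(t_i)}(|\xi|)}\,d\xi$. A judicious choice of $(s_i), (t_i)$ together with the rescaling property $N_{(r_i)}(c_1\rho)\leq c_2 N_{(\tilde r_i)}(\rho)$ extracts the factor $e^{-N_{(b_i)}(1/\varepsilon)}$ and produces \eqref{RC}.

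For part (b), the plan is to reproduce the Baire argument of Remark~\ref{baire}(ii) inside the Fr\'echet space $\mathcal D^{(M_p^1)}(K)$, whose Fr\'echet structure is guaranteed precisely by the assumption that $M_p^1 := M_p\prod_{i\leq p} b_i$ satisfies (M.1)--(M.3). First I would establish the continuous inclusions
\begin{equation*}
\mathcal D^{\{M_p\}}(K)\hookrightarrow \mathcal D^{(M_p^1)}(K)\hookrightarrow \mathcal E^{\{M_p\}}(K),
\end{equation*}
ensuring that $L_\varepsilon(\phi) := \langle g_\varepsilon - T,\phi\rangle$ extends continuously to $\mathcal D^{(M_p^1)}(K)$ (after possibly replacing $(h_i)$ by $(\max(h_i,b_i))\in\mathfrak R$). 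Density of $\mathcal D^{\{M_p\}}(K)$ in $\mathcal D^{(M_p^1)}(K)$, obtained via convolution with a Roumieu mollifier $\chi_\delta$, combined with the lower semicontinuity of $\phi\mapsto\sup_\varepsilon e^{N_{(b_i)}(1/\varepsilon)}|L_\varepsilon(\phi)|$, then promotes the pointwise bound from \eqref{RC} to all of $\mathcal D^{(M_p^1)}(K)$. Baire applied to the closed sets
\begin{equation*}
E_n := \{\phi\in\mathcal D^{(M_p^1)}(K):\, |L_\varepsilon(\phi)|e^{N_{(b_i)}(1/(n\varepsilon))}\leq 1 \text{ for every } \varepsilon \leq 1/n\}
\end{equation*}
produces $m,\ell$ and a ball $V_\ell\subset E_m$; the homogeneity argument of Remark~\ref{baire}(ii) yields $|L_\varepsilon(\phi)|\leq Ce^{-N_{(b_i)}(1/(m\varepsilon))}\|\phi\|_{K,M_p^1,\ell}$ for $\varepsilon\leq 1/m$. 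The identification $\|\phi\|_{K,M_p^1,\ell} = \|\phi\|_{K,(\ell b_i)}$ (set $h_i := \ell b_i$) together with the elementary equality $N_{(b_i)}(1/(m\varepsilon)) = N_{(mb_i)}(1/\varepsilon)$ (set $\tilde b_i := m b_i$) delivers the stated bound on $\mathcal D^{(M_p^1)}(K)$. A cutoff $\chi\in\mathcal D^{\{M_p\}}$ equal to $1$ near the supports of $T$ and $g_\varepsilon$ finally transfers the estimate to all $\theta\in\mathcal E^{\{M_p\}}(K)$.

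The main obstacle I anticipate is the bridge step in part (b): transferring pointwise boundedness from \eqref{RC}, given on the Roumieu space $\mathcal D^{\{M_p\}}(K)$, to the strictly larger Fr\'echet space $\mathcal D^{(M_p^1)}(K)$ on which Baire applies. The mollification $\phi\ast\chi_\delta\to\phi$ must converge in the Fr\'echet topology of $\mathcal D^{(M_p^1)}(K)$ and the resulting bound must pass to the limit, which relies on either the lower semicontinuity of the supremum above or on a direct Baire argument restricted to $\mathcal D^{\{M_p\}}(K)$ and then extended by density; a secondary technicality is arranging the sequences $(b_i)$ and $(h_i)$ to be comparable while preserving the (M.1)--(M.2) assumption on $(M_p^1)$.
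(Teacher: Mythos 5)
Your overall route for part (b) is the paper's: pass to the auxiliary Beurling class $(M^1_p)$, run the Baire/homogeneity argument of Remark \ref{baire}(ii) there, and translate back via $\|\phi\|_{K,M^1_p,\ell}=\|\phi\|_{K,(\ell b_i)}$ and $N_{(b_i)}(1/(m\varepsilon))=N_{(mb_i)}(1/\varepsilon)$ (the paper writes this as $M^1(t/\varepsilon)=N_{(b_i/t)}(1/\varepsilon)$). Your part (a) is a direct Fourier-analytic argument; the paper simply invokes Proposition 5.2 and Corollary 5.3 of \cite{D.V.1} transferred to the Roumieu case, and your sketch is consistent with what those results provide.

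The bridge step you flag is, however, where your plan breaks as written. The inclusion $\mathcal D^{(M^1_p)}(K)\hookrightarrow\mathcal E^{\{M_p\}}(K)$ is false: since $b_i\to\infty$, the product $\prod_{i\le p}b_i$ eventually dominates every $h^p$, so the inclusions run $\mathcal D^{\{M_p\}}(K)\hookrightarrow\mathcal D^{(M^1_p)}(K)$ and $\mathcal E^{\{M_p\}}(K)\subset\mathcal E^{(M^1_p)}(K)$, not conversely; an element of the Beurling class $(M^1_p)$ need not satisfy any bound $|\phi^{(\alpha)}|\le CH^{|\alpha|}M_\alpha$. Consequently $\langle T,\theta\rangle$ is not defined for general $\theta\in\mathcal D^{(M^1_p)}(K)$, so $L_\varepsilon$ has no canonical continuous extension, and the density-plus-lower-semicontinuity patch cannot even start, since lower semicontinuity of $\sup_\varepsilon e^{N_{(b_i)}(1/\varepsilon)}|L_\varepsilon(\cdot)|$ presupposes that each $L_\varepsilon$ is defined and continuous on the larger space. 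The paper obtains the pointwise bound on the larger Beurling test space from an input your argument never uses: in part (b) the representative is still the regularization $g_\varepsilon=T*\phi_\varepsilon$, the Roumieu mollifier net is also an $(M^1_p)$-mollifier net (this uses $\mathcal D^{\{M_p\}}\subset\mathcal D^{(M^1_p)}$, the correct direction), and the hypothesis that $M^1_p$ satisfies (M.1)--(M.3) is exactly what allows the Beurling-case embedding and association results of \cite{D.V.1} to be applied to the class $(M^1_p)$, giving $|\langle g_\varepsilon-T,\theta\rangle|=O(e^{-N_{(b_i)}(1/\varepsilon)})=O(e^{-M^1(1/\varepsilon)})$ for every $\theta\in\mathcal D^{(M^1_p)}$ before Baire is invoked. (Admittedly the paper's justification of how $T$ acts on $(M^1_p)$-test functions is terse, but the regularization structure is what makes that step workable, and it is absent from your plan.) Since the final estimate is asserted only for $\theta\in\mathcal E^{\{M_p\}}(K)\subset\mathcal E^{(M^1_p)}(K)$, the restriction at the end of your argument is unproblematic; it is the extension at the beginning that requires $g_\varepsilon=T*\phi_\varepsilon$.
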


\begin{proof}
\begin{enumerate}
\item[a)] Since $T\in\mathcal E'^{\{M_p\}}(\mathbb R^d)$ the convolution $T*\phi_\varepsilon$ is well defined. By a suitable transfer of arguments to the Roumieu case in Proposition 5.2 and  Corollary 5.3  of  \cite{D.V.1}, one can infer,   that \eqref{RC} holds.
\item[b)] Let  $M^1(\cdot)$ be the associated function  that corresponds to the sequence $(M^1_p).$ We have
\begin{align*}
\mathcal D'^{\{M_p\}}(\mathbb R^d)\hookrightarrow \mathcal D'^{(M^1_p)}(\mathbb R^d),    \quad \left(\text{because } \mathcal D^{(M_p^1)}(\mathbb R^d)\hookrightarrow \mathcal D^{\{M_p\}}(\mathbb R^d)\right),
\end{align*}
($\hookrightarrow$ dense and continuous embedding). We note that the net of mollifiers $(\phi_\varepsilon)$ in $\mathcal D^{\{M_p\}}$ stated above is also a net of mollifiers in $\mathcal D^{(M^1_p)}(\mathbb R^d).$  So, \eqref{RC} and the embedding theorem of \cite{D.V.1} give that $T$ and $g_\varepsilon=T*\phi_\varepsilon, $ $\varepsilon\in(0,1)$,  are elements  of $ \mathcal D^{(M^1_p)}(\mathbb R^d)$ and $\mathcal G^{(M^1_p)}(\mathbb R^d)$,  respectively,  and  that for every $\theta\in\mathcal D^{(M_p^1)}(\mathbb R^d)$ there holds that 
\begin{align*}
|\langle g_\varepsilon-T,\theta\rangle|\leq Ce^{M^1(1/\varepsilon)}, \quad \varepsilon\in(0,\varepsilon_0),
\end{align*} 
with suitable $C$ and $\varepsilon_0$ which depend on $\theta$. Thus,  $(g_\varepsilon)$ and $T$ are strongly $B$-associated as elements of  $\mathcal D^{(M^1_p)}(\mathbb R^d)$ and $\mathcal G^{(M^1_p)}(\mathbb R^d)$.  So, by Remark \ref{baire}, for any compact subset $K$ of $\Omega$ there exist positive constants $C, $ $\ell$, $t$ and $\varepsilon_0\in(0,1),$ such that 
\begin{equation*}%\label{uss} 
|\langle g_{\varepsilon} -T, \phi\rangle| \leq C e^{-M^1(t/\varepsilon)} \|\phi\|_{K,\ell},\quad \varepsilon\in(0,\varepsilon_0),\;  \phi\in \mathcal E^{(M^1_p)}(K).
\end{equation*}
   
Let us show that there exists a  sequence $(\tilde b_i)\in\mathfrak R$ such that 
\begin{align*}
 M^1(b/\varepsilon)=N_{(\tilde b_i)}(1/\varepsilon), \quad \varepsilon\in(0,\varepsilon_0).
\end{align*}
One has 
\begin{equation}\label{nej1} 
M^1(t/\varepsilon)=\sup_{p\in\mathbb N}\ln \frac{t^p}{\varepsilon^p M_p\prod_{i=1}^pb_i}=\sup_{p\in\mathbb N}\ln \frac{(1/\varepsilon)^p}{ M_p\prod_{i=1}^p(b_i/t)}=N_{(\tilde b_i)}(1/\varepsilon).
\end{equation}
where we put $\tilde b_i=b_i/t, $ $i\in\mathbb N.$  This is a sequence in $\mathfrak R.$ Let $\rho\in\mathcal E^{M^1_p,h}(K).$ Then,
\begin{align*}
\sup_{\alpha\in\mathbb N^d, x\in K}\frac{|\rho^{(\alpha)}(x)|}{h^pM_p\prod_{i=1}^pb_i}=\sup_{\alpha\in\mathbb N^d, x\in K}\frac{|\rho^{(\alpha)}(x)|}{M_p\prod_{i=1}^p(hb_i)}= ||\rho||_{K,(h_i)},
\end{align*}
where $h_i=hb_i, $ $i\in\mathbb N$ is a sequence in $\mathfrak{R}. $ With this and \eqref{nej1} we have 
\begin{align*}
e^{-M^1(t/\varepsilon)}||\rho||_{K,M_p^1,h}= e^{-N_{(\tilde b_i)}(1/\varepsilon)}||\rho||_{K,(h_i)}, \quad \rho\in\mathcal E^{\{M_p\}}(K).
\end{align*}
This completes the proof of the proposition.
\qedhere
\end{enumerate}
\end{proof}

%\noindent Since in the Roumieu case we have uncountable basis for  $\mathcal D^{\{M_p\}}(K)$ (for any $K\subset\subset\Omega$), we have another definition of strong association in the Roumieu case.

The main results of this section are the  next two theorems in which we consider separately Beurling and Roumieu case.

\begin{theorem}\label{b-theorem}
If a regular generalized Beurling  ultradistribution $g=[(g_\varepsilon)]\in\mathcal G^{\infty,(M_p)}(\Omega)$ is strongly $B$-associated to a Beurling  ultradistribution $T$, then $T\in \mathcal E^{(M_p)}(\Omega)$,  i.e.  it is an ultradifferentiable function of Beurling type.
\end{theorem}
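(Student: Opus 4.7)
The plan is to reduce the claim to a Fourier-transform decay estimate on $\widehat T$. By the sheaf property of $\mathcal G^{(M_p)}$ noted above, it suffices to prove $T\in\mathcal E^{(M_p)}$ on a neighborhood of each point of $\Omega$. Multiplying by a cutoff $\chi\in\mathcal D^{(M_p)}(\Omega)$ equal to $1$ on a neighborhood of a given compact set, one may assume $T\in\mathcal E'^{(M_p)}(\mathbb R^d)$ and that the regular representative $(g_\varepsilon)$ has support in a fixed compact $K\subset\subset\mathbb R^d$; the cutoff preserves both the regularity of $(g_\varepsilon)$ and the strong $B$-association with $T$ via the standard product estimates. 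The goal becomes to show $|\widehat T(\xi)|\leq C_{h'}\, e^{-M(|\xi|/h')}$ for every $h'>0$, from which Fourier inversion yields that $T$ is ultradifferentiable of class $(M_p)$.

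Decompose $\widehat T(\xi)=\widehat{g_\varepsilon}(\xi)+\bigl(\widehat T-\widehat{g_\varepsilon}\bigr)(\xi)$ and estimate each term separately. Using $\|g_\varepsilon\|_{K,h}=O(e^{M(k/\varepsilon)})$ for every $h>0$ (with the same $k$), integration by parts and the definition of $M$ give
\begin{equation*}
|\widehat{g_\varepsilon}(\xi)|\leq C_h\, e^{M(k/\varepsilon)-M(|\xi|/h)},\qquad h>0.
\end{equation*}
For the second term, Remark~\ref{baire}(ii) upgrades the strong $B$-association, via the Baire theorem, to the uniform estimate $|\langle g_\varepsilon-T,\phi\rangle|\leq C e^{-M(b/\varepsilon)}\|\phi\|_{K,\ell}$ for fixed $b,\ell>0$ and every $\phi\in\mathcal E^{(M_p)}(K)$. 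Applying this with $\phi=\chi\, e^{-i\xi\cdot}$ and using the algebra inequality $\|fg\|_{K,\ell}\leq \|f\|_{K,\ell/2}\|g\|_{K,\ell/2}$ (consequence of the Leibniz rule and $M_\beta M_{\alpha-\beta}\leq M_{|\alpha|}$), one computes $\|\chi\, e^{-i\xi\cdot}\|_{K,\ell}\leq C_\chi\, e^{M(s|\xi|)}$ with $s=2/\ell$, so that
\begin{equation*}
|\widehat T(\xi)-\widehat{g_\varepsilon}(\xi)|\leq C\, e^{M(s|\xi|)-M(b/\varepsilon)}.
\end{equation*}

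The conclusion follows by balancing $\varepsilon=\varepsilon(\xi)$ against $h$. The key tool is the (M.2)-consequence $M(H\rho)\geq 2M(\rho)-\ln c_0$, which, iterated, yields $M(H^n\rho)\geq 2^n M(\rho)-C_n$ and thus allows a single scale $M(c\rho)$ to dominate any prescribed sum $M(\rho_1)+M(\rho_2)$ once $c$ is taken large relative to $\max(\rho_1,\rho_2)/\rho$. Given a target $h'>0$, one first selects $\varepsilon\sim c_1/|\xi|$ with $c_1$ depending on $b,s,h',H$ so that $M(b/\varepsilon)\geq M(s|\xi|)+M(|\xi|/h')$, making the associator term $O(e^{-M(|\xi|/h')})$. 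With this $\varepsilon$, the regularity term is controlled by choosing $h=h(h')>0$ small enough that $M(|\xi|/h)\geq M(k/\varepsilon)+M(|\xi|/h')$. Combining both bounds gives $|\widehat T(\xi)|\leq C_{h'}\, e^{-M(|\xi|/h')}$ for $|\xi|$ large, and hence $T\in\mathcal E^{(M_p)}$ in a neighborhood of the chosen compact set; the sheaf property then delivers $T\in\mathcal E^{(M_p)}(\Omega)$.

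The main obstacle is the balancing step, since the two estimates pull $\varepsilon$ in opposite directions: small $\varepsilon$ is required to suppress the growth $e^{M(s|\xi|)}$ coming from the test function $\chi\, e^{-i\xi\cdot}$, while large $\varepsilon$ is required to suppress the growth $e^{M(k/\varepsilon)}$ from the regularity of $(g_\varepsilon)$. Simultaneous exponential decay at rate $e^{-M(|\xi|/h')}$ is only achievable because of the doubling-type inequality derived from (M.2); without it, one would at best recover polynomial rather than ultradifferentiable decay. The localization step, the Fourier characterization, and the product estimate for $\chi\, e^{-i\xi\cdot}$ are routine technical checks by comparison.
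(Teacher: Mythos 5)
Your proposal is correct and follows essentially the same route as the paper: localize to the compactly supported case, use the Fourier characterization of $(M_p)$-regularity of $(g_\varepsilon)$, apply the Baire-upgraded uniform association estimate of Remark \ref{baire}(ii) to the exponential test function, and then balance $\varepsilon=\varepsilon(\xi)$ against $h$ via the (M.2) doubling property of $M$. The only cosmetic difference is that you pair with $\chi\,e^{-i\xi\cdot}$ after a cutoff, whereas the paper invokes Remark \ref{baire}(i) to test directly with $e^{-i\xi\cdot}\in\mathcal E^{M_p,\ell}(K)$; both yield the same estimate $e^{M(s|\xi|)-M(b/\varepsilon)}$ and the same conclusion.
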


\begin{proof}  
%For the proof,  it is enough to show that for any given  $\lambda >0$,  there exists $A>0$,  such that $e^{{M}({|\xi|}/{\lambda})}|\hat{T}(\xi)|, $ $\varepsilon\in(0,\varepsilon_0),$ is bounded for $|\xi|>A.$ 

Since we have the partition of unity in the present non-quasianalytic case,   we can suppose, from now on,  without loss of generality, that 
$\mbox{supp } g_\varepsilon\subset K,$ $\varepsilon\in(0,1),$ where $K\subset\subset \Omega$.  We know by \cite{D.V.1} that compactly supported $g$ belongs 
to  $\mathcal G^{\infty,(M_p)}(\Omega)$ if and only its Fourier transforms $\hat{g}_{\varepsilon},$ $\varepsilon\in (0,1)$ satisfy,
\begin{equation}\label{ft}
\exists k>0, \forall h>0, \exists C_h >0,  \exists \varepsilon_0\in(0,1), \;\sup_{\xi\in\mathbb R^d} |\hat{g}_{\varepsilon}(\xi)|e^{M({|\xi|}/{h})} \leq C_h e^{M({k}/{\varepsilon})}, \;\; \varepsilon\in(0,\varepsilon_0).
\end{equation}  
In order to prove the theorem, by Paley-Wiener theorem, we  have to prove that 
\begin{equation*}
\sup_{\xi\in\mathbb R^d} \left|\hat{T}(\xi)e^{M({|\xi|}/{\lambda})}\right| <\infty,
\end{equation*}
for every $\lambda >0$. Note that $x\mapsto e^{-ix\cdot \xi}\in\mathcal E^{M_p,\ell}(K)$ for any $\ell>0.$  (This function also belongs to  $\mathcal E^{M_p,(\ell_i)}(K)$, for any $(\ell_i)\in\mathfrak R$.) By \eqref{uss} and \eqref{ft}, it follows that there exists constant $C>0$ depending on the compact set $K$ and $\ell>0$ such that,  by direct calculation, one has $\|e^{-ix\xi}\|_{K,\ell}\leq Ce^{M(|\xi|/\ell)}, $ $\xi\in \mathbb R^d,$ and
\begin{align*}
\left|\hat{T}(\xi)- \hat{g_{\varepsilon}}(\xi)\right| = \left|\left\langle T-g_{\varepsilon},e^{-i\xi x}\right\rangle\right| \leq Ce^{{M}(|\xi|/{\ell})-{M}(b/{\varepsilon})},\quad \xi\in\mathbb R^d,\; \varepsilon\in(0,\varepsilon_0).
\end{align*}
Since
\begin{align*}
\left|\hat{T}(\xi)\right| \leq \left|\hat{T}(\xi)- \hat{g_{\varepsilon}}(\xi)\right| +\left|\hat{g_{\varepsilon}}(\xi)\right|,\quad \xi\in\mathbb R^d, \;\varepsilon\in(0,\varepsilon_0),
\end{align*}
by the hypothesis on the regularity of $g$ and by \eqref{ft},  we obtain that  for any given $h>0,$ there holds
\begin{align*}
\left|\hat{T}(\xi)\right| \leq Ce^{{M}({|\xi|}/{\ell})-{M}(b/{\varepsilon})} + C_h e^{{M}({k}/{\varepsilon})-M({|\xi|}/{h})}, \quad \xi\in\mathbb R^d, \;\varepsilon\in(0,\varepsilon_0).
\end{align*}
For any $\lambda>0$ there exist $C_1>0$ and $C_2>0$ depending on $K$ and $h>0$, such that
\begin{align*}
e^{M({|\xi|}/{\lambda})}\left|\hat{T}(\xi)\right| \leq C_1e^{{M}({|\xi|}/{\lambda})+{M}({|\xi|}/{\ell})-M(b/\varepsilon)} +  C_2 e^{{M}({|\xi|}/{\lambda})+{M}({k}/{\varepsilon})-{M}({|\xi|}/{h})}, 
\end{align*}
for every $\xi\in\mathbb R^d, $ $\varepsilon\in(0,\varepsilon_0).$ Now, the proof is finished if we manage to choose $\varepsilon$ and $h$ such that for any $|\xi|>A$ (which will be determined below),  there hold
\begin{align}\label{jed2}
M({|\xi|}/{\lambda})+M({|\xi|}/{l})-M(b/{\varepsilon}) \leq 0 \;\;\mbox{ and }\;\; M({|\xi|}/{\lambda})+M({k}/{\varepsilon})-M({|\xi|}/{h})\leq 0. 
\end{align}
Let  us show this.  Choose $A>0$ and $\delta>0$ such that for given $\varepsilon_0<1$ it holds $b\delta/|\xi|=\varepsilon<\varepsilon_0,$ $\xi>A.$ Moreover, shrink $\delta>0,$  if it is necessary, so that
\begin{align*}
M({|\xi|}/{\delta}) \geq M({|\xi|}/{\lambda})+M({|\xi|}/{l}), \quad |\xi|>A.
\end{align*}
This is possible because of the properties of the associated function $M$.  So, for $\varepsilon=b\delta/|\xi|<\varepsilon_0,$  the first desired inequality \eqref{jed2} holds. We replace in the second inequality $1/\varepsilon$ by $b|\xi|/\delta$ and find $h$ such that,
\begin{align*}
M(|\xi|/h) \geq M(|\xi|/\lambda)+M(|kb\xi|/\delta),  \quad |\xi|>A.
\end{align*}
Thus, the second inequality holds and the proof is finished.
\end{proof}

\begin{remark} 
For Colombeau generalized functions an analogous result concerning $\mathcal{C}^{\infty}$  regularity was proved in \cite{Sca1}, for analytic regularity in \cite{psv.1}, and for finite type regularities in  \cite{psv.2}.
\end{remark}

Now we consider the Roumieu case.
\begin{theorem}\label{r-theorem}
Let $g$ be a regular generalized Roumieu ultradistribution, $g=[(g_\varepsilon)]\in\mathcal G^{\infty,\{M_p\}}(\Omega),$ i.e. let \eqref{RDo} hold. Assume that $g$ is strongly $R$-associated to a Roumieu  ultradistribution $T,$ i.e. that \eqref{RCpr} holds. Moreover, we assume that $b_i\leq k_i,$ $i\in\mathbb{N}.$ 
Then $T$ is a Roumieu   ultradifferentiable function,  i.e.  $T\in\mathcal E^{\{M_p\}}(\Omega).$
\end{theorem}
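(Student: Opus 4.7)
The plan is to mirror the structure of the Beurling proof (Theorem \ref{b-theorem}) but work inside the Roumieu machinery, with the sequences $(k_i)$ (regularity), $(\ell_i),(b_i)$ (strong $R$-association), and an arbitrary test sequence $(\lambda_i)$ replacing the scalars $k,\ell,b,\lambda$. By the fine and supple sheaf property of $\mathcal G^{\{M_p\}}$ we may multiply $g_\varepsilon$ by a cutoff and reduce to the case that each $g_\varepsilon$ is supported in a fixed $K\subset\subset\Omega$, so that $T\in\mathcal E'^{\{M_p\}}(\mathbb R^d)$ has support in $K$ as well. The goal, by the Roumieu version of the Paley--Wiener theorem, is to show that for every $(\lambda_i)\in\mathfrak R$ one has $\sup_\xi |\hat T(\xi)|\,e^{N_{(\lambda_i)}(|\xi|)}<\infty$.

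Two Fourier inequalities drive the argument. First, by the Roumieu analog of \eqref{ft} (\cite{D.V.1}), the regularity hypothesis \eqref{RDo} gives the existence of one $(k_i)\in\mathfrak R$ such that for every $(h_i)\in\mathfrak R$, some $C_{(h_i)}>0$ and some $\varepsilon_0>0$,
\begin{equation*}
|\hat g_\varepsilon(\xi)|\leq C_{(h_i)}\,e^{N_{(k_i)}(1/\varepsilon)-N_{(h_i)}(|\xi|)},\quad \varepsilon\in(0,\varepsilon_0),\ \xi\in\mathbb R^d.
\end{equation*}
Second, since $x\mapsto e^{-ix\cdot\xi}\in \mathcal E^{M_p,(\ell_i)}(K)$ and by a direct computation $\|e^{-i\cdot\xi}\|_{K,(\ell_i)}\leq e^{N_{(\ell_i)}(|\xi|)}$, the strong $R$-association \eqref{RCpr} applied to this test function (admissible by Remark \ref{baire}(i)) yields
\begin{equation*}
|\hat T(\xi)-\hat g_\varepsilon(\xi)|=|\langle T-g_\varepsilon,e^{-i\cdot\xi}\rangle|\leq C\,e^{N_{(\ell_i)}(|\xi|)-N_{(b_i)}(1/\varepsilon)}.
\end{equation*}
Combining via the triangle inequality bounds $|\hat T(\xi)|$ by the sum of these two quantities.

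It remains to choose $\varepsilon$ and $(h_i)$ as functions of $|\xi|$ and $(\lambda_i)$ so that both summands are dominated by $e^{-N_{(\lambda_i)}(|\xi|)}$ up to a constant. This is the Roumieu analog of picking $\varepsilon=b\delta/|\xi|$ in the Beurling proof, and it requires two auxiliary facts. First, a sum lemma: for any $(p_i),(q_i)\in\mathfrak R$ there exist $(r_i)\in\mathfrak R$ and $C_0>0$ with $N_{(p_i)}(\rho)+N_{(q_i)}(\rho)\leq N_{(r_i)}(\rho)+C_0$ for $\rho>0$; this is obtained from (M.2) by the familiar device $M_{2n}\leq c_0H^{2n}M_n^2$, taking $\tilde r_i=r_{\lceil i/2\rceil}/H$. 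Second, the subordinate-function apparatus of Subsection~1.2: using $M(c_{(b_i)}(\rho))=N_{(b_i)}(\rho)$, one can choose $1/\varepsilon$ so that $c_{(b_i)}(1/\varepsilon)=c_{(\nu_i)}(|\xi|)$, where $(\nu_i)\in\mathfrak R$ is furnished by the sum lemma applied to $N_{(\ell_i)}+N_{(\lambda_i)}\leq N_{(\nu_i)}+C_1$. With this choice, $N_{(b_i)}(1/\varepsilon)=N_{(\nu_i)}(|\xi|)$, which kills the first summand against $e^{N_{(\lambda_i)}(|\xi|)}$. For the second summand the hypothesis $b_i\leq k_i$ is crucial: it gives $N_{(k_i)}(1/\varepsilon)\leq N_{(b_i)}(1/\varepsilon)=N_{(\nu_i)}(|\xi|)$, and then applying the sum lemma once more to produce $(\mu_i)\in\mathfrak R$ with $N_{(\nu_i)}+N_{(\lambda_i)}\leq N_{(\mu_i)}+C_2$ and setting $(h_i)=(\mu_i)$ kills the second summand too. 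The restriction $\varepsilon<\varepsilon_0$ forces the argument to be valid only for $|\xi|$ large; but $\hat T$ is continuous (as the Fourier transform of a compactly supported ultradistribution), so the desired estimate extends to all $\xi$.

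The main obstacle is the balancing in the last step: unlike the Beurling case, where (M.2) for the sequence $M_p$ itself gives $M(\rho_1)+M(\rho_2)\leq M(H\max(\rho_1,\rho_2))+C$ immediately and a single scalar $\varepsilon=b\delta/|\xi|$ suffices, here the sequences $M_p\prod r_i$ generally fail (M.2), and the balancing must be executed through subordinate functions. The hypothesis $b_i\leq k_i$ is precisely what allows the $(k_i)$ appearing in the second summand to be controlled by $N_{(b_i)}(1/\varepsilon)$, which in turn has been prescribed to match $N_{(\nu_i)}(|\xi|)$ from the first summand; without this assumption the two estimates would be coupled to incompatible sequences and no common $\varepsilon(\xi)$ would produce the required decay.
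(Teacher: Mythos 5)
Your proposal is correct and follows essentially the same route as the paper: reduce to compact supports, test against $e^{-ix\cdot\xi}$, combine the strong $R$-association bound with the Fourier characterization of \eqref{RDo}, and balance the two terms by choosing $\varepsilon=\varepsilon(\xi)$ through the subordinate-function identity $c_{(b_i)}(1/\varepsilon)=c_{(\nu_i)}(|\xi|)$, with $b_i\leq k_i$ controlling the second term. Your direct use of $N_{(k_i)}\leq N_{(b_i)}$ is just a streamlined form of the paper's composition argument \eqref{afo}, and your explicit ``sum lemma'' only makes precise a standard fact the paper invokes without proof.
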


\begin{remark}\label{comp}
We note that condition \eqref{RCpr} is  weaker than the one given in the definition of $\mathcal N^{\{M_p\}}(\Omega).$ This condition does not allow the decrease of $(b_i)$ while  condition \eqref{RDo} does not allow the increase of $(k_i)$. But,  decreasing the sequence $(k_i),$ one can assume in Theorem \ref{r-theorem} that
\begin{equation*}
    (b_i)=(k_i).
\end{equation*}

Following Remark \ref{r*}, we note  that for a given $\psi\in\mathcal D^{\{M_p\}}(\mathbb R^n),$ the convolution with the mollifier  
$(\phi_\varepsilon)$ (given in that remark) satisfies $(\psi*\phi_\varepsilon-\psi)\in\mathcal N^{\{M_p\}}(\mathbb R^n)$. Moreover, one can   see that
$(\psi*\phi_\varepsilon)$  satisfies conditions (\ref{RDo}) and (\ref{RCpr}). Let $\mu\in\mathcal D^{\{M_p\}}(K)$. Then,
$(\psi*\phi_\varepsilon+e^{-N_{(b_i)}(1/\varepsilon)}\mu)$ satisfies (\ref{RDo}).
So we can conclude that Theorem \ref{r-theorem} is related to  perturbations of $(T*\phi_\varepsilon)$ so that the assertion of that theorem holds true. 
\end{remark}

\begin{proof}
We again suppose, without loss of generality, that both $T$ and $g_\varepsilon,$ $\varepsilon\in (0,1),$ have compact supports included in $ K\subset\subset\Omega.$  
Note that $k_i\geq b_i,$ $i\in \mathbb{N},$ implies 
\begin{align}\label{afo}
\frac{c_{(k_i)}(\rho)}{c_{(b_i)}(\rho)}\leq 1\quad\Rightarrow\quad c^{-1}_{(b_i)}(\rho)\leq c^{-1}_{(k_i)}(\rho)\quad\Rightarrow\quad c_{(k_i)}(c^{-1}_{(b_i)}(\rho))\leq \rho,\quad\rho\geq 0.
\end{align} 
This condition will be used later since it implies that for any $(\delta_i)\in\mathfrak{R}$ there holds that $c_{(k_i)}(c^{-1}_{(b_i)}(c_{(\delta_i)}(\rho))),$ $\rho>0,$ is a subordinate function.

By the  Paley-Wiener theorem  for ultradistributions with compact support (cf. \cite{Komatsu1}), the Roumieu regularity of $T$ amounts that  
\begin{equation}\label{**}
\sup_{\xi\in\mathbb R^d}\left| \hat{T}(\xi) e^{N_{(\lambda_i)}(|\xi|)}\right| <\infty\;\;\text{ for every }\;(\lambda_i)\in\mathfrak R.
\end{equation}
So let us prove \eqref{**}.  For $|T(\xi)-\hat g_\varepsilon(\xi)|$ we apply the definition of the strong $R$-association  \eqref{RCpr} and for $|\hat g_\varepsilon(\xi)|$ we apply \eqref{RDo}.  Since $x\mapsto e^{-ix\cdot \xi}\in\mathcal E^{M_p,(\ell_i)}(K)$, for every $(\ell_i)\in\mathfrak R,$ we calculate  the norm $\|e^{-ix\cdot\xi}\|_{K,(\ell_i)}$ and obtain $||e^{-ix\cdot \xi}||_{K,(\ell_i)}\leq Ce^{N_{(\ell_i)}(1/\varepsilon)},$ $\varepsilon\in(0,\varepsilon_0).$ With $|\hat{T}(\xi)| \leq  |\hat{g_{\varepsilon}}(\xi)|+|\hat{T}(\xi)- \hat{g_{\varepsilon}}(\xi)|,$ $\varepsilon\in (0,1),$ applying \eqref{RCpr} and the above norm estimate of $e^{-ix\cdot\xi},$ we obtain, for $\xi\in\mathbb R^d,$ $\varepsilon\in(0,\varepsilon_0),$ and suitable constants $C_1,C_2>0,$%for given $(\lambda_i)$by the assumption, 
\begin{align}\label{final} 
e^{N_{(\lambda_i)}(|\xi|)}&\left|\hat{T}(\xi)\right| \leq\nonumber\\
& C_1 e^{N_{(\ell_i)}(|\xi|)+N_{(\lambda_i)}(|\xi|)-N_{(b_i)}(1/\varepsilon)} + C_2 e^{N_{(k_i)}(1/\varepsilon)+N_{(\lambda_i)}(|\xi|)-N_{(h_i)}(|\xi|)}.
\end{align} 
Choose $(\delta_i)$ in $\mathfrak R$ so that
\begin{align*}
N_{(\delta_i)}(|\xi|)\geq N_{(\ell_i)}(|\xi|)+N_{(\lambda_i)}(|\xi|), \quad \xi\in\mathbb R^d.
\end{align*}
With this, we have to estimate
\begin{align*}
e^{N_{(\delta_i)}(|\xi|)-N_{(b_i)}(1/\varepsilon)},\quad |\xi|\geq A, \;\varepsilon\in(0,\varepsilon_0),
\end{align*}
for enough large $A>0$ which will be determined later.
Recall the notation with  subordinate functions,%  which corresponds to $(\delta_i)$
\begin{equation*}%\label{rnej}
N_{(\delta_i)}(|\xi|)=M(c_{(\delta_i)}(|\xi|)) \;\; \mbox{ and }\;\; N_{(b_i)}(1/\varepsilon)= M(c_{(b_i)}(1/\varepsilon)).
\end{equation*}
%Choose $A>0$ and $\delta>0$ such that for given $\varepsilon_0<1$ it holds $\frac{b\delta}{|\xi|}=\varepsilon<\varepsilon_0,$ $\xi>A.$
Let $|\xi|>A$ and $\varepsilon$ be defined by
\begin{align}\label{eq1}
c_{(\delta_i)}(|\xi|)=c_{(b_i)}(1/\varepsilon)\; \;\mbox{ i.e.  } \; \; \frac{1}{\varepsilon} =(c_{(b_i)})^{-1}(c_{(\delta_i)}(|\xi|)).
\end{align}
Since $(c_{(b_i)})^{-1}(c_{(\delta_i)}(|\xi|)), $ $\xi\in \mathbb{R}^d,$ is an increasing continuous function (not subordinated one, in general), for enough large $A$  and $|\xi|>A$, one obtains that $\varepsilon\in(0,\varepsilon_0).$ So $A$ is determined by the condition that in \eqref{eq1} $\varepsilon\in (0,\varepsilon_0).$ With this, we obtain that the first therm of the right hand side in \eqref{final} is bounded.  Now we insert this $\varepsilon$ into 
\begin{align*}
N_{(k_i)}(1/\varepsilon)=M(c_{(k_i)}(1/\varepsilon))=M(c_{(k_i)}((c_{(b_i)})^{-1}(c_{(\delta_i)}(|\xi|)))), \quad \varepsilon\in(0,\varepsilon_0).
\end{align*}
By \eqref{afo},% $c_{(k_i)}(c_{(b_i)})^{-1}$ is a subordinate function and
\begin{align*}
c_{(t_i)}(\rho)=c_{(k_i)}((c_{(b_i)})^{-1}(c_{(\delta_i)}(\rho))),\quad \rho\geq 0,
\end{align*}
is also a  subordinating function.
%All the subordinate functions are monotonically increasing.
The first step in the estimation of the second therm in \eqref{final} is to choose $(\theta_i)\in\mathfrak R$ so that $N_{(\theta_i)}(|\xi|)\geq N_{(\lambda_i)}(|\xi|)+N_{(k_i)}(|\xi|), $ $\xi\in\mathbb{R}^d.$ Now we choose $(h_i)$, thus $c_{(h_i)},$  so that $M(c_{(h_i)}(\rho))\geq N_{(\theta_i)}(\rho),$ $ \rho>0$. So with  $\varepsilon\in(0,1),$ one can choose $(h_i)$ such that the second therm in \eqref{final} is bounded for $\xi\in\mathbb{R}^d$, as well. This  finishes the proof.
\end{proof}

%%%%%%%%%%%
\section{Equalities}
%%%%%%%%%%%

In the next three theorems we show some  important properties of generalized ultradistributions, the ones which are proved for Colombeau generalized functions in the paper \cite{PSV}. The ideas of the proofs are similar to the ones from that paper but the framework of generalized ultradistributions  is more complex.

%We need the parametrix in the ultradistribution spaces $\mathcal D'^*(\mathbb R^d)$.
%In our non-quasianalytic case, it can be deduced from \cite{Komatsu1} and 
%\cite {Komatsu2}, but we give the proof based on \cite{ppv}.
%By Theorem 2.1 and Proposition 2.2 in \cite{ppv} we have:
%\begin{itemize}
%\item[I a)] in Beurling case: there exists $P(x)$ of class $(M_p),$ $P(x)\neq 0,$ $x\in \mathbb{R}^d$ such that $P(D)\mathcal{F}^{-1}(1/P(x))=\delta$ where $P$ satisfies
%\begin{align}\label{par1}
%\forall r>0,\; \exists k>0,\; \exists C>0,\; \forall x\in \mathbb{R}^d,\; \forall \alpha\in \mathbb{N}_0^d,\;\;\left|D^\alpha\left(\frac{1}{P(x)}\right)\right|\leq \frac{C\alpha!}{r^\alpha} e^{-M(kx)};
%\end{align}
%\item[I b)] in Roumieux case: there exists $P(x)$ of class $\{M_p\},$ $P(x)\neq 0,$ $x\in \mathbb{R}^d$ such that $P(D)\mathcal{F}^{-1}(1/P(x))=\delta$ where $P$ satisfies
%\begin{align}\label{par2}
%\forall r>0,\;\exists (k_p)\in\mathfrak{R},\;\exists C>0,\;\forall x\in \mathbb{R}^d,\;\forall \alpha\in \mathbb{N}_0^d,\;\;\left|D^\alpha\left(\frac{1}{P(x)}\right)\right|\leq \frac{C\alpha!}{r^\alpha} e^{-N(kx)}.
%\end{align}
%\end{itemize}

We will use so called Komatsu parametrix, see \cite[p. 195]{Komatsu2} (also \cite{ppv}). Let $K$ be a compact set so that $0\in K^\circ.$ Let $P_r(D),$ respectively $P_{(r_i)}(D),$ be ultradifferential operators of the form given in Section \ref{sec1.3}. Then in Beurling case, there exist $G\in \mathcal{D}^{M_p,\tilde r}_{K}(\mathbb{R}^d)$ and $\psi\in \mathcal{D}^{(M_p)}_K(\mathbb R^d)$, respectively $G\in \mathcal{D}^{M_p,(\tilde r_i)}_{K}(\mathbb{R}^d)$ and $\psi\in \mathcal{D}^{\{M_p\}}_K(\mathbb R^d)$ in Roumieu case, such that 
\begin{align*}
P_{\tilde r}(D)G=\delta+\psi,\;\text{ respectively }\;P_{(\tilde r_i)}(D)G=\delta+\psi.
\end{align*}

Actually, we have in \cite{ppv} weaker conditions on $(M_p)$ but for the next theorem we need $(M3)$ as well.

\begin{theorem}\label{thm 4.1}
Let a generalized ultradistribution $[(f_\varepsilon)]\in\mathcal{G}^{(M_p)}(\Omega),$ respectively, $[(f_\varepsilon)]\in \mathcal{G}^{\{M_p\}}(\Omega)$ be such that $\mbox{supp }f_\varepsilon\subset K,$ $\varepsilon\in (0,1)$ and 
\begin{itemize}
\item in Beurling case: $\exists \tilde r>0,$ $\forall \phi\in \mathcal D^{M_p,\tilde r}(K),$ respectively,
\item in Roumieu case: $\exists (\tilde r_i)\in\mathfrak{R},$ $\forall \phi\in \mathcal D^{M_p,(\tilde r_i)}(K),$
\end{itemize}
\begin{align}\label{nulcon}
\left(\int_{\mathbb{R}^d}f_\varepsilon(x)\phi(x)\ dx\right)\in \mathcal{N}^{(M_p)}_\mathbb{C},\text{ respectively }\in \mathcal{N}^{\{M_p\}}_\mathbb{C}.
\end{align}
Then $f=0$ in $\mathcal{G}^{(M_p)}(\Omega),$ respectively, $f=0$ in $\mathcal{G}^{\{M_p\}}(\Omega).$
\end{theorem}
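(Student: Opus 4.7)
The approach, inspired by the analogous Colombeau result in \cite{PSV}, is to combine the Komatsu parametrix with a Baire-category upgrade of the weak-negligibility hypothesis. Using that $\mathcal{G}^{*}(\Omega)$ is a fine sheaf, I would first localise and reduce to the case where $f_\varepsilon$ has compact support in a slightly enlarged $K$, leaving room for cutoffs and for translates of the parametrix to remain within $K$.

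The first main step is a Baire-category argument carried out in the Banach space of test functions. For each fixed $k > 0$ (Beurling) or $(k_i)\in\mathfrak R$ (Roumieu), consider the sets
\begin{equation*}
E_n = \Big\{\phi\in\mathcal D^{M_p,\tilde r}(K): \Big|\int f_\varepsilon\phi\,dx\Big|\leq n\,e^{-M(k/\varepsilon)},\ \varepsilon\leq 1/n\Big\}
\end{equation*}
(with the obvious Roumieu analogue). Each $E_n$ is closed, because for each fixed $\varepsilon$ the functional $\phi\mapsto\int f_\varepsilon\phi\,dx$ is continuous in $\|\cdot\|_{K,\tilde r}$, and by hypothesis $\bigcup_n E_n$ covers the whole Banach space. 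Repeating verbatim the Baire argument from Remark \ref{baire} extracts constants $C=C_k,\;\varepsilon_0=\varepsilon_0^k>0$ such that
\begin{equation*}
\Big|\int f_\varepsilon\phi\,dx\Big|\leq C\,\|\phi\|_{K,\tilde r}\,e^{-M(k/\varepsilon)},\quad \phi\in\mathcal D^{M_p,\tilde r}(K),\ \varepsilon\in(0,\varepsilon_0),
\end{equation*}
and the analogous estimate with $\|\phi\|_{K,(\tilde r_i)}$ and $e^{-N_{(k_i)}(1/\varepsilon)}$ in the Roumieu case.

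Next, I would invoke the Komatsu parametrix $P(D)G=\delta+\psi$, choosing $P(D)=P_r(D)$ (respectively $P_{(r_i)}(D)$) with parameter small enough that the corresponding $G\in\mathcal D^{M_p,\tilde r'}_{K_0}$ satisfies $\tilde r' H\leq\tilde r$ (where $H$ is the constant from (M.2); the analogous sequential ordering is imposed in the Roumieu case), and $\psi\in\mathcal D^{(M_p)}(K_0)$ (respectively $\mathcal D^{\{M_p\}}(K_0)$), with $K_0$ a small neighbourhood of $0$. This yields the decomposition
\begin{equation*}
f_\varepsilon(x) = P(D)(f_\varepsilon*G)(x)-(f_\varepsilon*\psi)(x).
\end{equation*}
Since $P(D)$ is continuous on $\mathcal{G}^{*}(\Omega)$ and preserves $\mathcal N^{*}(\Omega)$, it suffices to show both convolutions belong to $\mathcal N^{*}(\Omega)$. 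Differentiating under the integral,
\begin{equation*}
D^\alpha(f_\varepsilon*G)(x)=\int f_\varepsilon(y)\,(D^\alpha G)(x-y)\,dy,
\end{equation*}
and a direct calculation using (M.2) gives $\|(D^\alpha G)(x-\cdot)\|_{K,\tilde r}\leq c_0(\tilde r' H)^{|\alpha|}M_\alpha\|G\|_{K_0,\tilde r'}$; the condition $\tilde r' H\leq\tilde r$ is exactly what makes the supremum over the auxiliary multi-index $\beta$ converge. Combining with the uniform Baire estimate yields
\begin{equation*}
|D^\alpha(f_\varepsilon*G)(x)|\leq C'(\tilde r' H)^{|\alpha|}M_\alpha\,e^{-M(k/\varepsilon)}
\end{equation*}
uniformly in $x$ and $\alpha$. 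Because $\tilde r'$ can be shrunk at will by tuning $r$ (respectively $(r_i)$) in the parametrix, we obtain $\|f_\varepsilon*G\|_{K',h}=O(e^{-M(k/\varepsilon)})$ for every $h,k>0$, so $(f_\varepsilon*G)\in\mathcal N^{(M_p)}$. The estimate for $(f_\varepsilon*\psi)$ is the same argument and is in fact easier, because $\psi$ lies in the intersection space $\mathcal D^{(M_p)}$ (respectively $\mathcal D^{\{M_p\}}$). Applying $P(D)$ gives $(f_\varepsilon)\in\mathcal N^{(M_p)}$, i.e.\ $f=0$ in $\mathcal G^{(M_p)}(\Omega)$; the Roumieu argument follows the same scheme with $N_{(k_i)}$ replacing $M$ and sequences in $\mathfrak R$ replacing scalars.

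The main obstacle is the coordination between the parametrix and the hypothesis: we must be able to choose $G$ with $\tilde r'$ arbitrarily small relative to $\tilde r$, so that the translates $(D^\alpha G)(x-\cdot)$ sit in the specific Banach space $\mathcal D^{M_p,\tilde r}(K)$ of the hypothesis with $\alpha$-dependence controlled like $(\tilde r' H)^{|\alpha|}M_\alpha$. In the Roumieu case the additional difficulty is that one must match parametrix sequences $(\tilde r_i')\in\mathfrak R$ with the fixed hypothesis sequence $(\tilde r_i)$, which requires the directed-set structure of $\mathfrak R$ and the subordinate-function arithmetic recalled in Section \ref{bas-def}.
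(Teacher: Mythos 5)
Your localisation, the Baire uniformisation, and the Komatsu-parametrix decomposition are exactly the paper's skeleton, and the uniform estimate $\bigl|\int f_\varepsilon\phi\,dx\bigr|\leq C\|\phi\|_{K,\tilde r}\,e^{-M(k/\varepsilon)}$ is precisely what the paper extracts (there phrased as absorption of the bounded families of translates $G(u-\cdot)$, $\theta(u-\cdot)$ by the neighbourhood $V$). The gap is in your last step, the claim that $(f_\varepsilon*G)\in\mathcal N^{(M_p)}$. For a fixed parametrix, your bound $|D^\alpha(f_\varepsilon*G)(x)|\leq C'(\tilde r'H)^{|\alpha|}M_\alpha e^{-M(k/\varepsilon)}$ yields negligibility-type estimates only in the norms $\|\cdot\|_{K',h}$ with $h\geq\tilde r'H$, while membership in $\mathcal N^{(M_p)}$ requires every $h>0$. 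The proposed remedy, shrinking $\tilde r'$ by retuning the parametrix, does not repair this: each choice gives a different triple $(P,G,\psi)$, hence a different decomposition, not negligibility of one fixed net $(f_\varepsilon*G)$; and the loss cannot be recovered when $P(D)$ is applied, because $P(D)G=\delta+\psi$ forces $L\tilde r'\geq 1$ for any admissible constant $L$ in $|a_\gamma|\leq CL^{|\gamma|}/M_\gamma$ (otherwise $\sum_\gamma a_\gamma D^\gamma G$ would converge uniformly and $\delta+\psi$ would be continuous), so the strengthening of $P$ exactly offsets the gain in smoothness of $G$, and a term-by-term estimate of $D^\alpha P(D)(f_\varepsilon*G)$ from your bounds never converges for small $h$. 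The same obstruction appears, with sequences in $\mathfrak R$, in the Roumieu case.

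The missing ingredient is the fact the paper invokes, \cite[Proposition 4.2]{D.V.1}: a moderate net satisfying the zero-order negligibility estimates is negligible. With it, no derivative estimates on translates of $G$, no matching condition $\tilde r'H\leq\tilde r$, and no tuning of the parametrix are needed: the paper takes the parametrix with the same index $\tilde r$ (resp.\ $(\tilde r_i)$) as in the hypothesis, applies the Baire estimate to the bounded families $\{G(u-\cdot):u\in\Theta\}$ and $\{\theta(u-\cdot):u\in\Theta\}$ to get $|f_\varepsilon*G(u)|,\,|f_\varepsilon*\theta(u)|\leq Ce^{-M(k/\varepsilon)}$ (resp.\ $\leq Ce^{-N_{(k_i)}(1/\varepsilon)}$) on a neighbourhood $\Theta$ of $K$, notes that $(f_\varepsilon*G)$ and $(f_\varepsilon*\theta)$ are moderate, concludes that both lie in $\mathcal N^{*}(\Theta)$, and then uses that $P(D)$ preserves $\mathcal N^{*}$ before summing the two terms. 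If you insert this zero-order characterisation of negligibility in place of your final paragraph, your argument becomes essentially the paper's proof; without it (or an equivalent substitute), the step from your estimates to $f=0$ does not go through.
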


\begin{proof}
In order to simplify the proof, we assume that  $\Omega=\mathbb R^d$. Let $\Theta$ be a bounded open set  in $ \mathbb R^d$ such that $K\subset\subset\Theta$ and let $R>0$ be enough large so that   $\Theta-K\subset B(0,R),$ where $B(0,R)$ denotes a closed ball with center zero and radius $R$.  By the Komatsu type parametrix,
\begin{align*}
f_\varepsilon*\delta=f_\varepsilon=P^*(D)f_\varepsilon*G+f_\varepsilon*\theta,
\end{align*}
where $P^*$ stands for $P_{\tilde r},$ respectively $P_{(\tilde r_p)},$
and by \eqref{nulcon} we have that for every $k>0,$ respectively, $(k_i)\in\mathfrak R$, for every $\phi\in\mathcal D^{M_p,\tilde r}(B(0,R))$,  $\mbox{supp } \phi\in B(0,R),$ respectively,  $\phi\in\mathcal D^{M_p,(\tilde r_i)}(B(0,R))$,  $\mbox{supp } \phi\in B(0,R)$ there exist $C=C_\phi>0$ and  $\varepsilon_0=\varepsilon_{0,\phi}\in(0,1)$ such that
\begin{equation*}%\label{t1 **}
e^{M(k/\varepsilon)}\left|\int_{B(0,2R)} f_\varepsilon(x)\phi(x)\ dx\right|\leq C,\quad \varepsilon\in (0,\varepsilon_0),
\end{equation*}
respectively,
\begin{equation*}%\label{1t1 **}
e^{N_{(k_i)}(1/\varepsilon)}\left|\int_{B(0,2R)} f_\varepsilon(x)\phi(x)\ dx\right|\leq C,\quad \varepsilon\in (0,\varepsilon_0).
\end{equation*}
Note,  $\mathcal D^{M_p,\tilde r}(K)$ and $\mathcal D^{M_p,(\tilde r_i)}(K)$ are Banach spaces.
Let $k>0,$ respectively $(k_i)\in\mathfrak R,$ be fixed and $(\varepsilon_\nu)_\nu$ be a sequence in $(0,1)$ which strictly decrease to zero.
Put
\begin{equation*}
A_{b,\nu}=\left\{ \phi\in \mathcal D^{M_p,\tilde r}(K): 
e^{M(k/\varepsilon)}\left|\int_{B(0,2R)} f_\varepsilon(x)\phi(x)\ dx\right|\leq1/\varepsilon_\nu,\;\; \varepsilon\in (0,\varepsilon_{\nu})\right\}, \;\nu\in\mathbb N,
\end{equation*}
respectively,
\begin{equation*}
A_{r,\nu}=\left\{ \phi\in \mathcal D^{M_p,(\tilde r_i)}(K): 
e^{N_{(k_i)}(1/\varepsilon)}\left|\int_{B(0,2R)} f_\varepsilon(x)\phi(x)\ dx\right|\leq1/\varepsilon_\nu,\;\; \varepsilon\in (0,\varepsilon_{\nu})\right\},\; \nu\in\mathbb N.
\end{equation*}
 By the Baire theorem  and the fact that
 \begin{equation*}
\bigcup_{\nu\in\mathbb N}A_{b,\nu}= \mathcal D^{M_p,\tilde r}(K),\; \mbox{ respectively, }\;\bigcup_{\nu\in\mathbb N}A_{r,\nu}= \mathcal D^{M_p,(\tilde r_i)}(K),
 \end{equation*}
we have that there exists
$\nu_0, \phi_0$ and  $V$, a neighbourhood of zero in  $\mathcal D^{M_p,\tilde r}(B(0,R))$, respectively, $\mathcal D^{M_p,(\tilde r_i)}(B(0,R))$  such that
\begin{equation*}
\phi_0+V\subset A_{b,{\nu_0}} \Rightarrow e^{M(k/\varepsilon)}\left|\int_{B(0,2R)} f_\varepsilon(x)(\phi_0(x)+\phi(x))dx\right|\leq 1/\varepsilon_{\nu_0},\;\; \phi\in V,\;  \varepsilon\in (0,\varepsilon_{\nu_0}),
\end{equation*}
respectively,
\begin{equation*}
 \phi_0+V\subset A_{r,{\nu_0}} \Rightarrow e^{N_{(k_i)}(1/\varepsilon)}\left|\int_{B(0,2R)} f_\varepsilon(x)(\phi_0(x)+\phi(x))dx\right|\leq  1/\varepsilon_{\nu_0},\;\; \phi\in V,\;  \varepsilon\in (0,\varepsilon_{\nu_0}).
\end{equation*}

Thus, there exists $C>0$ and $\varepsilon_0\in(0,1)$ such that
\begin{equation*}
e^{M(k/\varepsilon)}\left|\int_{B(0,2R)} f_\varepsilon(x)\phi(x)\ dx\right|\leq C,\quad \phi\in V,\;  \varepsilon\in (0,\varepsilon_0),
\end{equation*}
respectively,
\begin{equation*}
e^{N_{(k_i)}(1/\varepsilon)}\left|\int_{B(0,2R)} f_\varepsilon(x)\phi(x)\ dx\right|\leq C,\quad \phi\in V,\;  \varepsilon\in (0,\varepsilon_0).
\end{equation*}
Since, for $x\in K,$ $u\in \Theta,$ $x-u\in B(0,R)$ it follows that $\{G(u-\cdot): u\in\Theta\}$ and $\{\theta(u-\cdot): u\in \Theta\},$ are bounded sets in  $\mathcal D^{M_p,t}(B(0,R))$, respectively,  $\mathcal D^{M_p,(t_i)}(B(0,R)).$ So  they are absorbed by $V.$ 
This implies that for every $k>0,$ respectively, $(k_i)\in\mathfrak R$  there exist $C>0$ and $\varepsilon_0\in (0,1)$ such that for $\varepsilon\in(0,\varepsilon_0),$ $u\in \Theta,$
\begin{align*}%\label{Gteta}
|f_\varepsilon\ast G(u)|,|f_\varepsilon\ast \theta(u)|\leq C e^{-M(k/\varepsilon)},
\mbox{ respectively, } \leq C e^{-N_{(k_i)}(1/\varepsilon)}.
\end{align*}
Since $(f_\varepsilon\ast G)\in \mathcal{E}^{(M_p)}_M(\Theta),$ respectively,  $(f_\varepsilon\ast G)\in \mathcal{E}^{\{M_p\}}_M(\Theta),$  by \cite[Proposition 4.2]{D.V.1}, we have that $(f_\varepsilon\ast G) \in \mathcal{N}^{(M_p)}(\Theta)$, respectively,  $(f_\varepsilon\ast G)\in \mathcal{N}^{\{M_p\}}(\Theta)$ and so $(P(D)(f_\varepsilon\ast G))\in \mathcal{N}^{(M_p)}(\Theta)$,  respectively,  $(P(D)(f_\varepsilon\ast G))\in \mathcal{N}^{\{M_p\}}(\Theta).$  The same proposition of \cite{D.V.1} implies $(f_\varepsilon\ast \theta)\in \mathcal{N}^{(M_p)}(\Theta),$ respectively, $(f_\varepsilon\ast \theta)\in \mathcal{N}^{\{M_p\}}(\Theta).$ 

Thus, $(f_\varepsilon)\in \mathcal{N}^{(M_p)}(\Theta),$ respectively, $(f_\varepsilon)\in \mathcal{N}^{\{M_p\}}(\Theta)$. 

Let us note that in the case of general $\Omega$ one has to use for $K\subset\subset\Omega$ an appropriate open set $\Omega'\subset\Omega$ instead of the ball $B(0,R)$ and  the  partition of unity.  
\end{proof}

\begin{theorem}\label{4-2}
Let $\Omega\subset \mathbb{R}^d$ and $f\in \mathcal{G}^{\infty,\ast}(\Omega).$ Then $f=0$ if and only if 
%$\left(\int f_\varepsilon(t)\kappa(t)dt\right) \in\mathcal N^\ast$, 
there exists $\varepsilon_0\in (0,1)$ such that for every $h>0$, respectively, for every $(h_i)\in\mathfrak{R},$ and every $\kappa\in \mathcal{D}^\ast(\Omega),$ exists $C=C_{h,\kappa}>0,$ respectively, $C=C_{(h_i),\kappa}>0,$ such that
\begin{align}\label{ops*}
&\int f_\varepsilon(t)\kappa(t)dt\; e^{M(h/\varepsilon)}<C,\quad \varepsilon<\varepsilon_0,\quad %\nonumber\\
\text{respectively,}\\ \nonumber
&\int f_\varepsilon(t)\kappa(t)dt \;e^{N_{(h_i)}(1/\varepsilon)}<C,\quad \varepsilon<\varepsilon_0.
\end{align}

\end{theorem}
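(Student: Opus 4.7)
The forward implication is immediate: if $f=0$ in $\mathcal{G}^\ast(\Omega)$, then a representative $(f_\varepsilon)\in\mathcal N^\ast(\Omega)$ satisfies $\|f_\varepsilon\|_{L^\infty(K)}=O(e^{-M(h/\varepsilon)})$ (resp.\ $O(e^{-N_{(h_i)}(1/\varepsilon)})$) for every $h>0$ (resp.\ $(h_i)\in\mathfrak R$) and every $K\subset\subset\Omega$, so that for $\kappa\in\mathcal D^\ast(\Omega)$ the crude estimate $|\langle f_\varepsilon,\kappa\rangle|\le\|f_\varepsilon\|_{L^\infty(\mathrm{supp}\,\kappa)}\|\kappa\|_{L^1}$ yields \eqref{ops*}.

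For the converse, assume $f\in\mathcal{G}^{\infty,\ast}(\Omega)$ satisfies \eqref{ops*}. By a $\mathcal D^\ast$-partition of unity I first reduce to the case where every $f_\varepsilon$ is supported in a fixed $K_0\subset\subset\Omega$; in particular $\hat f_\varepsilon$ is well defined. For fixed $h>0$ (resp.\ $(h_i)\in\mathfrak R$) the functionals $T_\varepsilon(\kappa):=e^{M(h/\varepsilon)}\langle f_\varepsilon,\kappa\rangle$, $\varepsilon<\varepsilon_0$, are continuous on the test space $\mathcal D^\ast(K_0)$ (which is Fr\'echet in the Beurling case and barrelled in the Roumieu case) and pointwise bounded by hypothesis. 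Banach--Steinhaus therefore supplies equicontinuity: there exist $\ell>0$ (resp.\ $(\ell_i)\in\mathfrak R$) and $C>0$ with
\begin{equation*}
|\langle f_\varepsilon,\kappa\rangle|\le C\|\kappa\|_{K_0,\ell}\,e^{-M(h/\varepsilon)},\qquad\kappa\in\mathcal D^{(M_p)}(K_0),\;\varepsilon<\varepsilon_0,
\end{equation*}
and the parallel Roumieu bound with $e^{-N_{(h_i)}(1/\varepsilon)}$ and $\|\kappa\|_{K_0,(\ell_i)}$.

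Choose a cutoff $\chi\in\mathcal D^\ast$ equal to $1$ in a neighbourhood of $K_0$. Applied to $\kappa_\xi(x):=\chi(x)e^{-ix\cdot\xi}\in\mathcal D^\ast$, together with the standard Leibniz estimate $\|\kappa_\xi\|_{K_0,\ell}\le Ce^{M(|\xi|/\ell')}$ (resp.\ $\le Ce^{N_{(\ell'_i)}(|\xi|)}$), the previous inequality yields
\begin{equation*}
|\hat f_\varepsilon(\xi)|\le Ce^{M(|\xi|/\ell')-M(h/\varepsilon)},\qquad\xi\in\mathbb R^d,\;\varepsilon<\varepsilon_0,
\end{equation*}
for every $h>0$ (and analogously for Roumieu). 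The regularity hypothesis $f\in\mathcal G^{\infty,\ast}$, through the Paley--Wiener characterisation \eqref{ft} (and its Roumieu counterpart), gives the complementary bound $|\hat f_\varepsilon(\xi)|\le C_{h_1}e^{M(k/\varepsilon)-M(|\xi|/h_1)}$ for some fixed $k>0$ (resp.\ $(k_i)\in\mathfrak R$) and every $h_1>0$ (resp.\ $(h^1_i)\in\mathfrak R$).

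To conclude $(f_\varepsilon)\in\mathcal N^\ast(\Omega)$ it suffices, via the Paley--Wiener description of negligibility, to deduce from these two Fourier bounds that for every $h',k'>0$ (resp.\ $(h'_i),(k'_i)\in\mathfrak R$) one has $\sup_\xi|\hat f_\varepsilon(\xi)|e^{M(|\xi|/h')}=O(e^{-M(k'/\varepsilon)})$. I would do this by a high/low frequency split: on $|\xi|\ge R(\varepsilon)$ use the regularity bound with $h_1$ so small that $M(|\xi|/h_1)$ dominates $M(|\xi|/h')+M(k/\varepsilon)+M(k'/\varepsilon)$; on $|\xi|\le R(\varepsilon)$ use the Banach--Steinhaus bound with $h$ so large that $M(h/\varepsilon)$ dominates $M(R/\ell')+M(R/h')+M(k'/\varepsilon)$. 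The main obstacle is matching these two regimes consistently at the crossover $R(\varepsilon)$ uniformly in small $\varepsilon$; feasibility rests on the sub/supermultiplicative estimates $k_1 M(\rho)\le M(h\rho)\le k_2 M(\rho)$ from (M.2), and in the Roumieu case on the directed-set analogue $N_{(r_i)}(c_1\rho)\le c_2 N_{(\tilde r_i)}(\rho)$ of Section~\ref{bas-def}, where one must additionally arrange for the auxiliary sequences $(\ell_i),(h^1_i),(h_i),(\tilde r_i)\in\mathfrak R$ to be compatible within $(\mathfrak R,\le)$.
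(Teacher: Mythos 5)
Your opening moves (reduction to fixed compact support, Banach--Steinhaus/barrelledness to upgrade the pointwise hypothesis \eqref{ops*} to a uniform bound $|\langle f_\varepsilon,\kappa\rangle|\le C\|\kappa\|_{K_0,\ell}e^{-M(h/\varepsilon)}$) coincide with the paper's, but from that point on you take a different route, and it is exactly there that the gap sits. The paper never returns to the Fourier side: it observes that $\mathcal D^{M_p,\tilde r}(K)\subset\mathcal D^{M_p,h}(K)$ for every $h\ge\tilde r$, so the uniform bounds give, for test functions in one \emph{fixed} Banach space, every decay rate $e^{-M(k/\varepsilon)}$; this is precisely the hypothesis of Theorem \ref{thm 4.1}, whose Komatsu-parametrix proof only tests $f_\varepsilon$ against the translates $G(u-\cdot)$, $\psi(u-\cdot)$, which form a \emph{bounded} subset of that fixed space, so no growth in an auxiliary parameter ever competes with the decay in $\varepsilon$. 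You instead test against $\kappa_\xi=\chi e^{-i\cdot\xi}$, whose norms grow like $e^{M(|\xi|/\ell')}$, and this is where your argument, as written, is circular: the index $\ell'=\ell'(h)$ produced by Banach--Steinhaus is completely uncontrolled as a function of $h$, yet your low-frequency step says ``choose $h$ so large that $M(h/\varepsilon)$ dominates $M(R/\ell')+M(R/h')+M(k'/\varepsilon)$'' with $R=R(\varepsilon)$ already fixed by the high-frequency regime. Since enlarging $h$ may shrink $\ell'(h)$ arbitrarily fast, the estimates $k_1M(\rho)\le M(h\rho)\le k_2M(\rho)$ that you invoke cannot by themselves guarantee that any $h$ works; the quantifier order must be reversed. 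The step can be repaired, e.g.\ by choosing $h$ \emph{first}, depending only on $k$ and $k'$ (it only has to beat a fixed multiple of $M(1/\varepsilon)$ plus $M(k'/\varepsilon)$), and then exploiting that the regularity bound \eqref{ft} holds for \emph{every} $h_1$ with the \emph{same} moderateness rate $k$: taking $h_1$ tiny (after $\ell'(h)$ is known) one may push the crossover down to $R(\varepsilon)=c(h_1)/\varepsilon$ with $c(h_1)\le\min(\ell'(h),h')$, or equivalently take a geometric mean of the two Fourier bounds. None of this is in your text, and it is the decisive point of the whole converse.

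Two further items are asserted but not established. First, the ``Paley--Wiener description of negligibility'' ($\sup_\xi|\hat f_\varepsilon(\xi)|e^{M(|\xi|/h')}=O(e^{-M(k'/\varepsilon)})$ for all $h',k'$ implies $(f_\varepsilon)\in\mathcal N^{\ast}$) is nowhere in the paper; it is true, but needs a Fourier-inversion argument together with the reduction of negligibility to zeroth-order estimates (\cite[Prop.~4.2]{D.V.1}), which you should supply. Second, in the Roumieu case the ``compatibility within $(\mathfrak R,\le)$'' that you defer is not a routine afterthought: one must dominate sums such as $N_{(\ell_i)}(|\xi|)+N_{(h'_i)}(|\xi|)$ and $N_{(k_i)}(1/\varepsilon)+N_{(k'_i)}(1/\varepsilon)$ by single $N_{(\cdot)}$'s and repeat the quantifier-ordering fix with sequences in place of constants; this is the same type of obstruction that forces the extra hypothesis $b_i\le k_i$ in Theorem \ref{r-theorem}, and your proposal does not explain how it is overcome here. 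The paper's reduction to Theorem \ref{thm 4.1} bypasses both issues, which is why it is the shorter and safer route.
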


\begin{remark} Condition \eqref{ops*} is stronger than $\left(\int f_\varepsilon(t)\kappa(t)dt\right) \in\mathcal N^\ast_\mathbb{C}$, for every $\kappa\in \mathcal{D}^\ast(\Omega).$ Actually, in the Beurling case we need to assume weaker condition than \eqref{ops*}. We will show this in Proposition \ref{p6} below. It is an open question whether we can in the Roumieu case assume a weaker condition than that in \eqref{ops*}.
\end{remark}
\begin{proof}
We assume that $\mbox{supp }f_\varepsilon\subset K,$ $ \varepsilon\in (0,1),$ 
where $K$ is a compact set of $\Omega.$ Our aim is to show that the conditions of Theorem \ref{thm 4.1} hold. We have 
\begin{align*}%\label{f51}
&\exists s>0,\; \forall h>0,\; \exists \tilde\varepsilon_0\in(0,1),\; \exists T_h=T>0,\;\; \left\| f_\varepsilon e^{-M(s/\varepsilon)}\right\|_{\mathcal{E}^{M_p,h}(K)} <T,\quad \varepsilon\in (0,\tilde\varepsilon_0),
\end{align*}
respectively,
\begin{align*}%\label{f52}
&\exists (s_i),\; \forall (h_i),\; \exists \tilde\varepsilon_0\in(0,1),\; \exists T_{(h_i)}=T>0,\;\; \left\| f_\varepsilon e^{-N_{(s_i)}(1/\varepsilon)}\right\|_{\mathcal{E}^{M_p,(h_i)}(K)} <T,\quad \varepsilon\in (0,\tilde\varepsilon_0).
\end{align*}
(We assume that $\varepsilon_0<\tilde\varepsilon_0$.) Condition \eqref{ops*} is equivalent with: there exists $\varepsilon_0$ such that for for every $h>0,$ respectively $(h_i)\in \mathfrak{R},$ 
\begin{align*} \label{nov11}
e^{M(h/\varepsilon)-M(s/\varepsilon)}\left\langle f_\varepsilon,\varphi\right\rangle=O(1),\;\; \varphi\in\mathcal D^{(M_p)}(\Omega),\text{ respectively, }\\
e^{N_{(h_i)}(1/\varepsilon)-N_{(s_i)}(1/\varepsilon)}\langle f_\varepsilon,\varphi\rangle=O(1),\;\; \varphi\in\mathcal D^{\{M_p\}}(\Omega),\;\varepsilon<\varepsilon_0.
\end{align*}
 This follows from the  properties of associated functions $M$ and $N_{(k_i)}.$

Let $\varepsilon<\varepsilon_0$. We note that for every $h>0$, respectively, $(h_i)$ there holds that
$e^{M(h/\varepsilon)-M(s/\varepsilon)}\langle f_\varepsilon, \psi \rangle$, respectively, $e^{N_{(h_i)}(1/\varepsilon)-N_{(s_i)}(1/\varepsilon)}\langle f_\varepsilon, \psi \rangle,$
is well defined for every $\psi\in \mathcal{D}^{M_p,h},$
respectively, for every $\psi\in \mathcal{D}^{M_p,(h_i)}.$

Next, fix $h>0$, respectively $(h_i)\in\mathfrak R.$  Since Banach spaces $\mathcal{D}^{M_p,h}(K)$ and $\mathcal{D}^{M_p,(h_i)}(K)$ are 
barreled and the set 
$\{e^{M(h/\varepsilon)-M(s/\varepsilon)}f_\varepsilon : \varepsilon<\varepsilon_0\}$, respectively 
$\{e^{N_{(h_i)}(1/\varepsilon)-N_{(s_i)}(1/\varepsilon)}f_\varepsilon:\varepsilon<\varepsilon_0\}$, is weakly bounded, it follows that it is strongly bounded. 
So, there exists open set $V_h=\{\varphi\in\mathcal{D}^{M_p,h}: \|\varphi\|_{K,h}<\delta\},$ respectively, open set $V_{(h_i)}=\{\varphi\in\mathcal{D}^{M_p,(h_i)}: \|\varphi\|_{K,(h_i)}<\delta\}$ such that 
\begin{align*}
\exists C>0,\;\forall \varphi\in V_h,\quad e^{M(k/\varepsilon)}\left\langle f_\varepsilon e^{-M(s/\varepsilon)},\varphi\right\rangle \leq C,\;\varepsilon<\varepsilon_0,
\end{align*}
respectively,
\begin{align*}
\exists C>0,\;\forall \varphi\in V_{(h_i)},\quad e^{N_{(h_i)}(1/\varepsilon)}\left\langle f_\varepsilon e^{-N_{(s_i)}(1/\varepsilon)},\varphi\right\rangle \leq C,\;\varepsilon<\varepsilon_0.
\end{align*}
We continue the proof in Beurling case. Let $(\varphi_n)_n$ be a sequence in $\mathcal{D}^{(M_p)}$ so that $\varphi_n\to \varphi$ in $\mathcal{D}^{M_p,h}$ ($\varphi\in \mathcal{D}^{M_p,h}$). There holds
\begin{align*}
e^{M(h/\varepsilon)}\left\langle f_\varepsilon e^{-M(s/\varepsilon)},\varphi_n-\varphi_m\right\rangle\leq C,\quad m,n>n_0, \;\varepsilon<\varepsilon_0.
\end{align*}
 This implies 
\begin{align*}
\left|\left\langle e^{M(h/\varepsilon)-M(s/\varepsilon)}f_\varepsilon,\varphi_n-\varphi\right\rangle\right|\leq C,\quad n>n_0,\; \varepsilon<\varepsilon_0.
\end{align*}
Thus for every $\varepsilon<\varepsilon_0$,
$
|\langle e^{M(h/\varepsilon)-M(s/\varepsilon)}f_\varepsilon,\varphi\rangle|\leq C.
$

If above instead of $M(h/\varepsilon),$ $M(s/\varepsilon)$ and $V_h$ one puts 
$N_{(h_i)}(1/\varepsilon),$ 
$N_{(s_i)}(1/\varepsilon)$ 
and $V_{(h_i)},$ one obtains that there exists $(h_i)\in \mathfrak{R}$ such that for every $\varphi\in \mathcal{D}^{M_p,(h_i)}$
\begin{align*}
\left|\left\langle e^{N_{(h_i)}(1/\varepsilon)-N_{(s_i)}(1/\varepsilon)}f_\varepsilon,\varphi\right\rangle\right|\leq C,\quad \varepsilon<\varepsilon_0.
\end{align*}
Thus, the conditions of the previous theorem holds and the assertion follows.
\end{proof}

%\begin{remark}
%We note that under the conditions of Theorems \ref{b-theorem} and \ref{r-theorem}, respectively,  for $(g_\varepsilon)\in \mathcal E_M^*(\mathbb R^d)$ (and $T\in\mathcal E'^*(\mathbb R^d)$),  the weak negligible condition for $(g_\varepsilon)$ of Theorem \ref{4-2} directly implies that  $(g_\varepsilon)\in\mathcal N^*(\mathbb R^d).$
%\end{remark}

\begin{proposition}\label{p6}
In the Beurling case one can assume 
\begin{align*}
\forall h>0,\;\;\forall \kappa\in\mathcal{D}^{(M_p)}(K),\;\;&\exists \varepsilon_0\in (0,1),\;\;\exists C=C_{h,\kappa}>0,\\
&\int f_\varepsilon(t)\kappa(t)dt\;e^{M(h/\varepsilon)}\leq C,\quad \varepsilon<\varepsilon_0.
\end{align*}
\end{proposition}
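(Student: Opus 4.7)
The plan is to reduce Proposition~\ref{p6} to Theorem~\ref{4-2}: I will show that the weaker hypothesis stated here, combined with the standing assumption $f=[(f_\varepsilon)]\in\mathcal G^{\infty,(M_p)}(\Omega)$, already implies condition~\eqref{ops*} of Theorem~\ref{4-2} in the Beurling case. Once that implication is in hand, Theorem~\ref{4-2} immediately yields $f=0$.

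As in the proof of Theorem~\ref{4-2}, I would first reduce by partition of unity to the case $\supp f_\varepsilon\subset K$; a standard cutoff argument then shows that the weaker hypothesis, stated for $\kappa\in\mathcal D^{(M_p)}(K)$, yields the corresponding bound for every $\kappa\in\mathcal D^{(M_p)}(\Omega)$. Next, from the $\mathcal G^{\infty,(M_p)}$-regularity of $f$, I would extract $s>0$, $\varepsilon_*\in(0,1)$ and $C_*>0$ (depending only on $f$ and $K$) with $\|f_\varepsilon\|_{L^\infty(K)}\leq\|f_\varepsilon\|_{K,1}\leq C_*e^{M(s/\varepsilon)}$ for $\varepsilon<\varepsilon_*$. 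This $\varepsilon_*$ will play the role of the uniform $\varepsilon_0$ demanded by~\eqref{ops*}.

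Now fix $h>0$ and $\kappa\in\mathcal D^{(M_p)}(K)$, and let $\varepsilon_0(h,\kappa)$, $C(h,\kappa)$ be the data from the weaker hypothesis. On $\varepsilon<\min(\varepsilon_0(h,\kappa),\varepsilon_*)$ the weak hypothesis bounds $|\int f_\varepsilon\kappa|\,e^{M(h/\varepsilon)}$ by $C(h,\kappa)$ directly. On the possibly empty interval $[\varepsilon_0(h,\kappa),\varepsilon_*)$, $\varepsilon$ is bounded below by $\varepsilon_0(h,\kappa)$, so monotonicity of $M$ forces both $e^{M(s/\varepsilon)}$ and $e^{M(h/\varepsilon)}$ to attain their maxima at the left endpoint; combining with the crude estimate $|\int f_\varepsilon\kappa|\leq\|f_\varepsilon\|_{L^\infty(K)}\|\kappa\|_{L^1(K)}$ yields a finite constant depending only on $h$ and $\kappa$. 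Taking the maximum of the two constants verifies~\eqref{ops*} with the uniform choice $\varepsilon_0:=\varepsilon_*$.

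The only substantive step is this regularity-based extension from a pointwise-in-$(h,\kappa)$ estimate to one with a uniform $\varepsilon_0$; no Baire-type argument beyond what is already used in Theorem~\ref{4-2} is required. The key input is that $\mathcal G^{\infty,(M_p)}$-regularity supplies a single exponential bound for $\|f_\varepsilon\|_{L^\infty(K)}$ on a fixed neighbourhood of $0$, which dominates the short interval where the weak hypothesis has not yet taken effect. This is precisely why the Beurling case succeeds and, in the absence of an analogous direct regularity route, the corresponding Roumieu statement is left open in the remark preceding the proposition.
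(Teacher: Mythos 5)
Your reduction is genuinely different from the paper's: the paper proves Proposition \ref{p6} by a Baire category argument in the complete metric space $\mathcal{D}^{(M_p)}(K)$ (the closed sets $F_n$ covering $\mathcal{D}^{(M_p)}(K)$, a ball inside some $F_{n_0}$, then absorption), which produces an estimate that is \emph{uniform over $\kappa$ in a whole neighbourhood of zero}; you instead try to verify the literal hypothesis \eqref{ops*} of Theorem \ref{4-2} pointwise in $(h,\kappa)$, by splitting the $\varepsilon$-range at $\varepsilon_0(h,\kappa)$ and using the moderateness sup-bound on the middle interval. The splitting is a correct elementary estimate, but look at what it delivers: a bound for each individual $\kappa$ whose constant contains $e^{M(s/\varepsilon_0(h,\kappa))+M(h/\varepsilon_0(h,\kappa))}$, i.e.\ with no control whatsoever as $\kappa$ varies. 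The mechanism behind Theorem \ref{4-2} is to pass, via a uniform-boundedness/equicontinuity step and density, from test functions in $\mathcal{D}^{(M_p)}(K)$ to \emph{all} $\phi$ in a fixed Banach space $\mathcal{D}^{M_p,\tilde r}(K)$, because that is what the hypothesis of Theorem \ref{thm 4.1} requires; the whole point of Proposition \ref{p6} is to manufacture exactly this uniformity over $\kappa$ out of the weak hypothesis (compare Remark \ref{baire}), and that is where the Baire theorem --- hence the restriction to the Beurling case, where $\mathcal{D}^{(M_p)}(K)$ is metrizable and complete --- enters. Your argument produces no uniformity in $\kappa$ at all and defers the entire burden to a black-box, word-for-word reading of the hypothesis of Theorem \ref{4-2}.

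That this cannot be the intended content is visible from your own closing paragraph: nothing in your argument is actually specific to the Beurling case. Regularity is not even used --- plain moderateness gives the sup-bound you need --- and the analogous bound $\|f_\varepsilon\|_{L^\infty(K)}\le C_*e^{N_{(k_i)}(1/\varepsilon)}$, $\varepsilon<\varepsilon_*$, is equally available in the Roumieu setting, so the identical interval-splitting would verbatim ``verify'' the Roumieu version of \eqref{ops*} from the weak Roumieu hypothesis and thereby settle the question the paper explicitly declares open in the remark preceding the proposition. So either the literal quantifier reading of \eqref{ops*} trivializes both the proposition and the open problem --- which the paper's remark that \eqref{ops*} is strictly stronger than weak negligibility shows is not what the condition is meant to encode --- or, measured against what the proofs of Theorems \ref{thm 4.1} and \ref{4-2} actually consume, your proof has a genuine gap at precisely the step the paper's Baire argument handles: upgrading the $\kappa$-wise estimates to an estimate valid, with one $s$, one $\varepsilon_0$ and one constant, on a $\|\cdot\|_{K,\ell}$-ball of $\mathcal{D}^{(M_p)}(K)$, which by absorption and density transfers to the Banach spaces $\mathcal{D}^{M_p,h}(K)$ needed there. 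You should either carry out that uniformization (as the paper does) or justify, independently of the literal statement, why your pointwise condition suffices; as written, the proposal misidentifies the Beurling-specific ingredient (it is the Baire property of $\mathcal{D}^{(M_p)}(K)$, not a regularity bound on $\|f_\varepsilon\|_{L^\infty(K)}$).
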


\begin{proof}
Since $\mathcal{D}^{(M_p)}(K)$ is a complete metric space and 
\begin{equation*}
F_n=\left\{\phi\in \mathcal{D}^{(M_p)}(K): \left|\int g_\varepsilon (x)e^{-M(n\varepsilon)}\phi(x)dx\right|\leq 1, \; \varepsilon\leq 1/n \right\},
\end{equation*}
are closed so that $\bigcup_{n\in\mathbb{N}}F_n=\mathcal{D}^{(M_p)}(K),$ one of them, $F_{n_0},$ contains an open ball. So there exist $s>0,$ $V_{K,k},$ $\varepsilon_0\in (0,1)$ and $C>0$ such that
\begin{align*}
\phi\in V_{K,k}\;\wedge\;\varepsilon<\varepsilon_0\;\Rightarrow\;\left\langle f_\varepsilon e^{-M(s/\varepsilon)},\phi\right\rangle\leq C.
\end{align*}
Since any element in $\mathcal D^{(M_p)}(K)$ is absorbed by $V_{K,k},$ the condition of Theorem \ref{4-2} holds in the Beurling case and as in this theorem we finish the proof.
\end{proof}

\begin{theorem}
Let $f\in \mathcal{G}^{*}(\mathbb{R}^d)$ be such that $f(\cdot+h)-f(\cdot)\in\mathcal N^{*}$  for any $h\in \mathbb{R}^d.$ Then $f=C=[(C)_\varepsilon]\in \mathbb{C}^{*}.$
\end{theorem}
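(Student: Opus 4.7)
The plan is to identify a natural generalized constant $C$ and verify $f - C \in \mathcal N^{*}(\mathbb R^d)$. Fix $\phi \in \mathcal D^{*}(\mathbb R^d)$ with $\int \phi = 1$ and set $C_\varepsilon := \int f_\varepsilon(y)\phi(y)\,dy$. Moderateness of $(f_\varepsilon)$ gives $(C_\varepsilon) \in \mathcal E^{*}_M[\mathbb C]$, so $C := [(C_\varepsilon)] \in \mathbb C^{*}$. Using $\int \phi = 1$ together with the substitution $y = x - h$ yields the representation
\begin{equation*}
f_\varepsilon(x) - C_\varepsilon \;=\; \int [f_\varepsilon(x) - f_\varepsilon(y)]\phi(y)\,dy \;=\; \int \Delta_h f_\varepsilon(x-h)\,\phi(x-h)\,dh,
\end{equation*}
where $\Delta_h g(y) := g(y+h)-g(y)$ and, for $x \in K$ compact, the effective range of integration is the compact set $K - \supp \phi$.

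The main step upgrades the pointwise-in-$h$ hypothesis $\Delta_h f_\varepsilon \in \mathcal N^{*}$ to uniformity in $h$ via a Baire category argument on the complete metric space $\mathbb R^d$. For fixed $K$ and seminorm parameters ($r,k>0$ in the Beurling case, $(r_i),(k_i) \in \mathfrak R$ in the Roumieu case), the sets
\begin{equation*}
A_n := \bigl\{ h \in \mathbb R^d : \|\Delta_h f_\varepsilon\|_{K,r} \leq n\, e^{-M(k/\varepsilon)},\ \forall \varepsilon \in (0,1/n) \bigr\}
\end{equation*}
(analogously with $e^{-N_{(k_i)}(1/\varepsilon)}$ in the Roumieu case) are closed by translation continuity in $\mathcal E^{M_p,r}(K)$, and the hypothesis yields $\bigcup_n A_n = \mathbb R^d$; Baire produces $n_0,h_0,\delta$ with $B(h_0,\delta) \subset A_{n_0}$. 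The cocycle identity $\Delta_{h_0+h'}f_\varepsilon(y) = \Delta_{h'}f_\varepsilon(y+h_0) + \Delta_{h_0}f_\varepsilon(y)$ transfers the bound to $\|\Delta_{h'}f_\varepsilon\|_{K+h_0,r}$ for $h' \in B(0,\delta)$, and iteration $\Delta_{h_1+\cdots+h_N}f_\varepsilon = \sum_j \Delta_{h_j}f_\varepsilon(\cdot + h_1 + \cdots + h_{j-1})$ extends uniformity of $\|\Delta_h f_\varepsilon\|_{K',r}$ to $h$ in any prescribed compact $K_h \subset \mathbb R^d$, for an appropriately enlarged evaluation compact $K'$ chosen in advance of the Baire step.

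Differentiating the integral representation via Leibniz,
\begin{equation*}
D^\alpha_x(f_\varepsilon(x)-C_\varepsilon) = \sum_\beta \binom{\alpha}{\beta}\int [D^\beta \Delta_h f_\varepsilon](x-h)\, [D^{\alpha-\beta}\phi](x-h)\,dh,
\end{equation*}
the difference factor is controlled by the Baire-uniform estimate and the $\phi$-factor by $\|\phi\|_{K'',s}$ (Beurling) or $\|\phi\|_{K'',(s_i)}$ (Roumieu) on some compact $K'' \supset \supp\phi$. Invoking the log-convexity bound $M_\beta M_{\alpha-\beta} \leq M_\alpha$ (from (M.1) together with $M_0=1$) and the binomial theorem yields $\|f_\varepsilon - C_\varepsilon\|_{K,r+s} = O(e^{-M(k/\varepsilon)})$ in the Beurling case, and an analogous estimate in the Roumieu case after selecting a suitable dominating sequence in $\mathfrak R$. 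Since $K,r,s,k$ (resp.\ $(r_i),(s_i),(k_i)$) are arbitrary, $(f_\varepsilon - C_\varepsilon) \in \mathcal N^{*}(\mathbb R^d)$, so $f=C$.

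The delicate part is the Roumieu version: the Baire step must be carried out for every pair $(r_i),(k_i) \in \mathfrak R$, the cocycle iteration must preserve $N_{(k_i)}$-decay, and the products $\prod_{i \leq |\beta|} r_i \cdot \prod_{i \leq |\alpha-\beta|} s_i$ in the Leibniz expansion must be collapsed into $\prod_{i \leq |\alpha|} u_i$ for some $(u_i) \in \mathfrak R$. This combinatorial bookkeeping is handled using that $\mathfrak R$ is directed both upwards and downwards, together with the subordinate-function manipulations recalled in Section~\ref{bas-def}.
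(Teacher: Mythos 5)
Your argument is correct in substance and its engine is the same as the paper's: a Baire category argument on $\mathbb{R}^d$ in the translation parameter, which upgrades the per-$h$ negligibility of $f(\cdot+h)-f(\cdot)$ to an estimate uniform for $h$ in a small ball, followed by a telescoping (cocycle) identity and finitely many steps to cover any prescribed compact set of translations; the paper does exactly this, its sets $F_{(p_i),l}$ playing the role of your $A_n$, but only at order zero. Where you genuinely diverge is the finish: the paper takes $C_\varepsilon=f_\varepsilon(0)$ and proves only the zeroth-order estimate $\sup_{t\in B(0,R)}|f_\varepsilon(t+h)-f_\varepsilon(t)|=O(e^{-N_{(p_i)}(1/\varepsilon)})$, so its conclusion $f=C$ implicitly rests on the fact that for moderate nets zeroth-order decay already yields negligibility (the result of \cite{D.V.1} invoked earlier in the proof of Theorem \ref{thm 4.1}); you instead take $C_\varepsilon=\int f_\varepsilon\phi$ with $\int\phi=1$, run Baire at the level of the full norms $\|\cdot\|_{K,r}$, respectively $\|\cdot\|_{K,(r_i)}$, and obtain all derivative estimates directly from the Leibniz expansion together with $M_\beta M_{\alpha-\beta}\leq M_\alpha$ and the $\mathfrak{R}$-bookkeeping, which is more self-contained at the price of that bookkeeping (and is legitimate, since in both cases the Baire space is $\mathbb{R}^d$, so the Roumieu case causes no obstruction here). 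One detail needs repair: with your identity $\Delta_{h_0+h'}f_\varepsilon(y)=\Delta_{h'}f_\varepsilon(y+h_0)+\Delta_{h_0}f_\varepsilon(y)$ the transferred bound lives on the compact $K+h_0$, and since Baire gives no control whatsoever on $h_0$ you cannot have ``chosen $K'$ in advance'' so as to contain it; use instead $\Delta_{h'}f_\varepsilon(y)=\Delta_{h_0+h'}f_\varepsilon(y)-\Delta_{h_0}f_\varepsilon(y+h')$, which keeps the evaluation compact fixed up to a shift of size $|h'|\leq\delta$, so it suffices to perform the Baire step on a $1$-neighbourhood of the prescribed compact and cap $\delta\leq 1$ (the paper's passage from $B(0,R)$ to $B(0,R-r)$ is the analogous adjustment); with that change your proof goes through.
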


\begin{proof}
We know in Beurling case:
\begin{align*}
&\forall R\in \mathbb{N},\;\forall p\in \mathbb{N},\;\forall x\in \mathbb{R}^d,\; \exists l_x\in \mathbb{N},
\\
\sup_{t\in B(0,R)}& e^{M(p/\varepsilon)} \left|f_\varepsilon(x+t)-f_\varepsilon(t)\right|\leq 1,\;  \varepsilon<1/\ell_x,
\end{align*}
while in Roumieu case:
\begin{align*}
&\forall R\in \mathbb{N},\; \forall (p_i)\in \mathfrak R,\; \forall x\in \mathbb{R}^d,\; \exists l_x\in \mathbb{N},\\
\sup_{t\in B(0,R)} & e^{N_{(p_i)}(1/\varepsilon)} \left|f_\varepsilon(x+t)-f_\varepsilon(t)\right|\leq 1, \; \varepsilon<1/l_x.
\end{align*}

 Our aim is to show that $[(f_\varepsilon(t))]=[(f_\varepsilon(0))]$ for every $t\in\mathbb{R}$. 
 
 We will prove the assertion now in Roumieu case since by substituting relevant sequences in $\mathfrak R$ with the corresponding positive constants, one arrives to the proof of Beurling case.  Fix $(p_i)\in\mathfrak R$. Let $l\in \mathbb{N}$ and 
\begin{align*}
F_{(p_i),l}=\left\{x\in \mathbb{R}^d:\; \varepsilon<1/l\; \Rightarrow\; \sup_{t\in B(0,R)}e^{N_{(p_i)}(1/\varepsilon)}|f_\varepsilon(x+t)-f_\varepsilon(t)|\leq 1 \right\}.
\end{align*}
As   $\bigcup_{l=1}^\infty F_{(p_i),l} =\mathbb R^d$, by the Baire theorem, there exist $x_0\in \mathbb{R}^d,$ $l_0\in \mathbb{N}$ and $r>0,$ so that  $B(x_0,r)\subset F_{(p_i),l_0}.$ 
%Let $r_1\leq r.$
If $x\in B(0,R-r),$ $h\in B(0,r)$ then for $\varepsilon<1/l_0,$
\begin{align*}
\sup_{t\in B(0,R)} & e^{N_{(p_i)}(1/\varepsilon)}|f_\varepsilon(t+h)-f_\varepsilon(t)|\\
\leq & \sup_{t\in B(0,R-r)}e^{N_{(p_i)}(1/\varepsilon)}\left(|f_\varepsilon(x+t+h)-f_\varepsilon(t+h)|+|f_\varepsilon(x+t+h)-f_\varepsilon(t)|\right)\\
%\leq & \sup_{t\in B(0,R-r)}e^{N_{(p_i)}(1/\varepsilon)}\left(|f_\varepsilon(x+t)-f_\varepsilon(t+h)|+|f_\varepsilon(x+t+h)-f_\varepsilon(t)|\right)\\ 
\leq & 2.
\end{align*} 
Thus, for every $(p_i)\in\mathfrak R$,  there exists $C>0$ such that
\begin{align}\label{jed}
\sup_{t\in B(0,R)}|f_\varepsilon(t+h)-f_\varepsilon(t)|\leq C e^{-N_{(p_i)}(1/\varepsilon)}.
\end{align}
Let $f_\varepsilon(0)=C_\varepsilon,$ $\varepsilon\in(0,1).$ Estimate \eqref{jed} implies that for $|h|<r,$ $f_\varepsilon(h)=C_\varepsilon+Ce^{-N_{(p_i)}(1/\varepsilon)},$ $\varepsilon\in(0,1),$ which means that $[(f_\varepsilon(h))]=[(C_\varepsilon)].$ Up to any point $t$ of $\mathbb{R}^d$ one can come with a finite number of steps of length $h_i,$ $h_i<r,$ $i=1,\dots,s.$ So $[(f_\varepsilon(0))]=[(f_\varepsilon(h_1))]=\dots=[(f_\varepsilon(h_{s-1}))]=[(f_\varepsilon(t))].$
 This completes the proof of the assertion.
\end{proof}

%%%%%%%%%%%%%%%%%%%%%%%%%%%%%%%%%%%%%%%%%%%%
\bmhead{Acknowledgments}
%\section*{Acknowledgement}

The work of S. Pilipovi\'c is supported by the project F10 of the Serbian Academy of Sciences and Arts.\\
The work of M. \v Zigi\' c was partially supported by the Science Fund of the Republic of Serbia, \#GRANT No 2727, \textit{Global and local analysis of operators and distributions} - GOALS, and by the Ministry of Science, Technological Development and Innovation of the Republic of Serbia,  project no.  451-03-47/2023-01/200125.
%%%%%%%%%%%%%%%%%%%%%%%%%%%%%%%%%%%%%%%%%%%

%%%%%%%%%%

%%%%%%%%%%%%%

\begin{thebibliography}{99}
%%%%%%%%%%

\bibitem{col1}
Colombeau,  J.F.:
Elementary Introduction to New Generalized Functions.
North-Holland,  Amsterdam (1985)

\bibitem{D.V.1}
Debrouwere,  A., Vernaeve,  H., Vindas, J.:
Optimal embeddings of ultradistributions into differential algebras.
Monatsh.  Math.  186, 407--438 (2018).  
https://doi.org/10.1007/s00605-017-1066-6

%\bibitem{Del}, A. Delcroix, Regular rapidly decreasing nonlinear generalized functions. Application to microlocal regularity, J. Math. Anal. Appl. 327 (2007), 564?584.

\bibitem{Del-Has-Pil-Val}
Delcroix, A., Hasler,  M.,  Pilipovi\'c, S.,  Valmorin,  V.:
Embeddings of ultradistributions and periodic hyperfunctions in Colombeau type algebras through sequence spaces.
Math. Proc. Cambridge Philos. Soc. 137, 697--708  (2004).  
https://doi.org/10.1017/S0305004104007923

\bibitem{DHPV} 
Delcroix,  A., Hasler,  M.,  Pilipovi\'c, S., Valmorin,  V.:
Sequence spaces with exponent weights. Realizations of Colombeau type algebras. Dissertationes Math.  447 (56 p.) (2007).
https://doi.org/10.4064/dm447-0-1

%\bibitem{Del-Sca} A. Delcroix and D. Scarpal\'ezos, Asymptotic scales-asymptotic algebras, Integral Transforms Spec. Funct. 6 (1998), 181?190.

%\bibitem{Del-Sca-2}  A. Delcroix and D. Scarpalezos,, Sharp topologies on (C,E,P)-algebras, in: M. Grosser et al. (eds.), Nonlinear Theory of Generalized Functions, Chapman and Hall/CRC Res. Notes Math. 401, 1999, 165?173.

\bibitem{Del-Sca-3} 
Delcroix, A.,  Scarpal\'ezos, D.:
Topology on asymptotic algebras of generalized functions and applications.
Monatsh. Math. 129,  1--14 (2000).
https://doi.org/10.1007/s006050050001

\bibitem{gkos}
Grosser,  M.,  Kunzinger, M., Oberguggenberger,  M.,  Steinbauer, R.:
Geometric Generalized Functions with Applications to General Relativity.
Kluwer,  Dordrecht (2001)

\bibitem{her-kunz} 
H\"ormann, G.,  Kunzinger,  M.:
Microlocal properties of basic operations in Colombeau algebras.
J. Math. Anal. Appl. 261,  254--270 (2001).
https://doi.org/10.1006/jmaa.2001.7498

\bibitem{Komatsu1} 
Komatsu,  H.:
Ultradistributions, I: Structure theorems and a characterization.
J. Fac. Sci. Univ. Tokyo Sect. IA Math. 20,  25--105  (1973)

\bibitem{Komatsu2} 
Komatsu,  H.:  
Microlocal analysis in Gevrey classes and in complex domains.  In: Cattabriga, L., Rodino, L. (eds) Microlocal analysis and applications, pp. 161--236, Springer, Berlin (1991).
https://doi.org/10.1007/BFb0085124

\bibitem{KMMP}  
Maksimovi\'c, S.,  Mincheva-Kami\' nska,  S.,  Pilipovi\'c, S.,  Sokoloski,  P.:
A sequential approach to ultradistribution spaces. 
Publ. Inst. Math. (Beograd) (N.S.) 100, 17--48  (2016).
https://doi.org/10.2298/PIM1614017M

\bibitem{ober 001}
Oberguggenberger, M.:
Multiplication of Distributions and Application to Partial Differential Equations.
Pitman Res. Notes Math. Ser. 259, Longman, Harlow (1992)

\bibitem{ps} Pilipovic,  S.,  Scarpaelezos, D.:
Colombeau generalized ultradistributions.
Math. Proc. Camb. Phil. Soc.  130,  541--553 (2001).
https://doi.org/10.1017/S0305004101005072

\bibitem{psv.1} 
Pilipovi\' c,  S.,  Scarpal\' ezos, D.,  Valmorin,  V.:
Real analytic generalized functions. 
Monatsh. Math. 156,  85--102  (2009).
https://doi.org/10.1007/s00605-008-0524-6

\bibitem{PSV} 
Pilipovi\' c,  S.,  Scarpal\' ezos,  D., Valmorin,  V.:
Equalities in algebras of generalized functions.
Forum Math. 18, 789--801  (2006).
https://doi.org/10.1515/FORUM.2006.039

\bibitem{psv.2} 
Pilipovi\' c,  S.,  Scarpal\' ezos,  D.,  Vindas,  J.:
Besov regularity in non-linear generalized functions.
Monatsh. Math. 201,  483--498  (2023).
https://doi.org/10.1007/s00605-022-01783-1

\bibitem{ppv} 
Pilipovi\' c,  S.,  Prangoski,  B.,  Vindas,  J.:
On quasianalytic classes of Gelfand-Shilov type. Parametrix and convolution.
J. Math. Pures Appl.  116, 174--210   (2018).
https://doi.org/10.1016/j.matpur.2017.10.008

\bibitem{Sca1} 
Scarpal\' ezos,  D.:
Colombeau's Generalized Functions: Topological Structures; Microlocal Properties. A Simplified Point of View.
Prepublication Mathematiques de Paris 7/CNRS, URA212 (1993)

\bibitem{Sca2} 
Scarpal\' ezos,  D.:
Some remarks on functionality of Colombeau's construction: topological and microlocal aspects and applications.
Integral Transforms Spec.  Funct. 6,  295--307  (1998).
https://doi.org/10.1080/10652469808819174

\bibitem{Sca3} 
Scarpal\' ezos,  D.:
Colombeau's generalized functions: Topological structures, microlocal properties. A simplified point of view, Part I. 
Bull. Cl. Sci. Math. Nat. Sci. Math.  25,  89--114  (2000)

\bibitem{Sca4} 
Scarpal\' ezos,  D.:
Colombeau's generalized functions: Topological structures, microlocal properties. A simplified point of view, Part II. 
Publ. Inst. Math. (Beograd) (N.S.) 76, 111--125  (2004).
https://doi.org/10.2298/PIM0476111S

\end{thebibliography}
\end{document}